\newcommand{\smallT}{\text{\small$\mathbf{T}$}}
\newcommand{\subT}{\mathbf{T}}
\newcommand{\smallF}{\text{\small$\mathbf{F}$}}
\newcommand{\subF}{\mathbf{F}}
\title{\textbf{Permutations avoiding 1324 \\
and patterns in {\L}uka\-sie\-wicz paths}}
\author{$\phantom{{}^\dagger}$David Bevan${}^\dagger$}
\date{}
\begin{document}
\maketitle

{\let\thefootnote\relax\footnotetext
{${}^\dagger$Department of Mathematics and Statistics, The Open University, Milton Keynes, England.}}

{\let\thefootnote\relax\footnotetext
{2010 Mathematics Subject Classification:
05A05, 
05A16. 
}}

\begin{abstract}
\noindent
The class $\av(\pdiamond)$, of permutations avoiding the pattern $\pdiamond$,
is one of the simplest sets of combinatorial objects to define
that has, thus far, failed to reveal its enumerative secrets.
By considering certain large subsets of the class,
which consist of permutations with a particularly regular structure,
we prove that the growth rate of the class exceeds $9.81$.
This improves on a previous lower bound of $9.47$.
Central to our proof is an examination of the asymptotic
distributions of certain substructures in
the Hasse graphs of the permutations.
In this context,
we consider
occurrences of patterns in {\L}uka\-sie\-wicz paths and prove that
in the limit they exhibit a concentrated Gaussian distribution.
\end{abstract}

\section{Introduction}\label{sectIntro}

We identify a permutation with the sequence of its values.
A permutation $\sigma=\sigma_1\ldots\sigma_n$ of $\{1,\ldots,n\}$ is said to \emph{avoid} a
permutation (often referred to as a \emph{pattern})
$\pi=\pi_1\ldots\pi_k$ of $\{1,\ldots,k\}$ if there is no subsequence of $\sigma$ that has the same relative order as $\pi$.
The class consisting of those permutations that avoid a permutation $\pi$ is denoted by $\av(\pi)$.
Due to the celebrated proof of the Stanley--Wilf conjecture by Marcus \& Tardos~\cite{MT2004}, it is known that $\av(\pi)$
has a finite asymptotic \emph{growth rate}
$$
\gr(\av(\pi)) \;=\; \liminfty \sqrt[n]{S_n(\pi)},
$$
where $S_n(\pi)$ is the number of elements of $\av(\pi)$ of length $n$.
The growth rate of $\av(\pi)$ is also known as the \emph{Stanley--Wilf limit} of $\pi$.

Our interest is in $\av(\pdiamond)$.
This is the only class avoiding a pattern of length four that is yet to be enumerated exactly. Moreover, even the growth rate of the $\pdiamond$-avoiders is currently unknown.
In a recent paper,
Conway \& Guttmann~\cite{CG2015}
calculate the number of permutations avoiding $\pdiamond$ up to length
$36$, building on earlier work by Johansson \& Nakamura~\cite{JN2014}.
They then analyse the sequence of values and
give an estimate for
the growth rate of $\av(\pdiamond)$
of~$11.60\pm0.01$.
However, rigorous bounds still differ from this value quite markedly.

The last few years have seen a steady reduction
in upper bounds on the growth rate, 
based on
a
colouring scheme of Claesson, Jel\'inek \& Stein\-gr\'imsson~\cite{CJS2012} which yields a 
value of~$16$.
B\'ona~\cite{Bona2014+}
has now reduced this to
$13.73718$ by employing
a refined counting argument.

As far as lower bounds go,
Albert, Elder, Rechnitzer, Westcott \& Zabrocki~\cite{AERWZ2006}
have
established that the growth rate is at least $9.47$,
by
using the \emph{insertion encoding} 
of
$\pdiamond$-avoiders
to construct a
sequence of finite automata that accept subclasses of $\av(\pdiamond)$.
The growth rate of a subclass is then determined from the
transition matrix of the corresponding automaton.
Our main result is an improvement on this lower bound:

\thmbox{
\begin{thm}\label{thm1324LowerBound}
$\gr(\av(\pdiamond)) > 9.81$.
\end{thm}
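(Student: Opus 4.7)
The plan is to exhibit an explicit, richly parameterised subfamily $\mathcal{S}\subseteq\av(\pdiamond)$ whose elements of length $n$ number at least $9.81^{n-o(n)}$. Guided by the abstract, I would look for permutations with a very regular geometric structure, the kind that can be read off the \emph{Hasse graph} (the plot together with its covering relation): for instance, permutations consisting of a small number of interleaved increasing/decreasing ``staircase'' blocks, each of which is locally perturbed in a controlled way. The shape of such a permutation should be encodable by a {\L}ukasiewicz path (equivalently, a plane tree) whose step types record which kind of block is being traversed, and the fine data should be encoded as a local decoration at each step. The encoding must be set up so that $\pdiamond$-avoidance follows from purely local compatibility constraints; verifying this is the first task, typically done by contradiction from the Hasse-graph geometry of a hypothetical $\pdiamond$ occurrence.

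Once the encoding is in place, the number $c(\mathbf{w})$ of permutations in $\mathcal{S}_n$ produced from a given path $\mathbf{w}$ of length $n$ should factor as a product of local multiplicities, and hence depend only on the counts of certain fixed \emph{consecutive-step patterns} in $\mathbf{w}$. Thus
\[
|\mathcal{S}_n| \;=\; \sum_{\mathbf{w}} c(\mathbf{w})
\]
can be estimated by understanding the joint distribution of these pattern counts over a random {\L}ukasiewicz path of length $n$. This is exactly where the abstract's promise comes in: I would prove that, for any fixed finite family of consecutive-step patterns, the joint distribution of their occurrence counts in a uniformly random {\L}ukasiewicz path (with arbitrary step-weight generating function) is asymptotically Gaussian, with means and variances linear in $n$ and hence with exponentially strong concentration around the mean. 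Plugging such concentration into $\log c(\mathbf{w})$ shows that an exponentially large proportion of paths produce at least $\mathrm{e}^{n H}$ permutations, for an explicit entropy $H$ expressible in terms of the step weights and the local multiplicities, yielding a lower bound of the form $\gr(\mathcal{S})\ge\rho\cdot\mathrm{e}^{H}$ where $\rho$ is the structural constant of the path family.

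The final step is a numerical optimisation: choose the step-weight parameters (and any free integer parameters in the encoding, such as the maximum allowed size of a local perturbation) to maximise this lower bound, and check that the optimum exceeds $9.81$. I would expect the optimisation to be carried out on a transfer-matrix style expression, with the entropy contribution turning the problem into maximising the dominant eigenvalue of a parametric matrix.

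The main obstacle will be twofold. Analytically, proving the Gaussian concentration for consecutive-pattern counts on {\L}ukasiewicz paths with the required uniformity over step weights is delicate: one needs either a quasi-power theorem applied to a suitably refined bivariate generating function, or a martingale/regenerative argument exploiting the tree interpretation. Combinatorially, the harder obstacle is \emph{designing} a subfamily $\mathcal{S}$ that is simultaneously (i) provably $\pdiamond$-avoiding, (ii) rich enough to push the bound past $9.81$, and (iii) simple enough that the resulting entropy $H$ can be written down and optimised explicitly. Clearing $9.81$ rather than merely improving on the existing $9.47$ suggests that the encoding will have to allow genuinely two-parameter local perturbations, not just the one-parameter staircases implicit in the automaton approach of Albert et~al.~\cite{AERWZ2006}.
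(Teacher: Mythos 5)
Your high-level plan captures all three of the paper's structural pillars — a regular subfamily cut out of $\av(\pdiamond)$ using Hasse-graph geometry, concentration of consecutive-pattern statistics in {\L}ukasiewicz paths, and a final numerical optimisation — and this is genuinely the right reading of the abstract. But the concrete decomposition differs in ways that matter, so the two arguments are not the same.

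The paper does not encode a $\pdiamond$-avoider as a single decorated {\L}ukasiewicz path of length $n$. Instead it takes permutations whose Hasse graph is spanned by an \emph{alternating sequence of roughly $\sqrt{n}$ plane trees each of size roughly $\sqrt{n}$} (red trees rooted at left-to-right minima, blue trees rooted at right-to-left maxima). The entropy comes not from decorations on a single path but from the freedom to interleave the non-root vertices of each adjacent red/blue pair of trees, and the key combinatorial fact is that these $2k$ interleavings are independent. The {\L}ukasiewicz-path machinery enters in a much more targeted way than you anticipate: a single red tree (of size $\Theta(\sqrt n)$, not $n$) is encoded as a {\L}ukasiewicz path, and what corresponds to a consecutive pattern is the ``red fringe'' — the set of red vertices with which a given blue subtree is allowed to mix. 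Crucially there are three other parameters tracked separately (the isomorphism type of a blue subtree, the gap sizes in the pre-interleaving, and red-forest types), each with its own bivariate generating function and its own concentration proof; these are then combined not via a joint multivariate CLT but via a near-trivial lemma that two marginally concentrated parameters are jointly concentrated. So the strong statement you propose to prove — asymptotic joint Gaussianity for an arbitrary finite family of consecutive patterns with arbitrary step weights — is more than the paper needs or proves. Finally, the optimisation is carried out on a closed-form product of local contributions $Q(\smallT,\smallF)^{\mu(\smallT,\smallF)}$, not on a transfer-matrix eigenvalue.

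The practical consequence of the structural difference is worth noting: a single-path encoding of the whole permutation would make the local compatibility constraints genuinely two-dimensional (the red and blue sides interact across the entire boundary), and it is far from clear that $c(\mathbf{w})$ would factor nicely into local multiplicities depending only on consecutive-pattern counts. The paper sidesteps this by the $\sqrt{n}\times\sqrt{n}$ scaling, which makes the interleaving at each of the $2k$ boundary steps independent of all the others. You would need an analogous independence or near-independence argument to make your scheme close, and finding one is likely harder than the route the paper takes. That said, your instinct that clearing $9.81$ requires ``genuinely two-parameter local perturbations'' is correct: in the paper these are precisely the pair $(\smallT,\smallF)$ of a blue subtree and its red fringe, and the preliminary bound that ignores this pairing (allowing no blue subtree to be split by a red vertex) only reaches $9.40$.
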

}

\begin{figure}[ht]
  $$
  \begin{tikzpicture}[scale=0.2,line join=round]
    \draw [black!50,very thick] (1,15)--(4,20);
    \draw [black!50,very thick] (2,9)--(3,10)--(5,13)--(15,14);
    \draw [black!50,very thick] (7,5)--(12,6)--(13,8);
    \draw [black!50,very thick] (14,1)--(19,3);
    \draw [black!50,very thick] (1,15)--(6,16)--(8,19);
    \draw [black!50,very thick] (10,18)--(6,16)--(11,17);
    \draw [black!50,very thick] (3,10)--(16,11)--(17,12);
    \draw [black!50,very thick] (12,6)--(18,7);
    \draw [black!50,very thick] (14,1)--(20,2);
    \draw [black!50,very thick] (3,10)--(4,20);
    \draw [black!50,very thick] (5,13)--(6,16);
    \draw [black!50,very thick] (8,19)--(7,5)--(10,18)--(9,4)--(11,17)--(7,5);
    \draw [black!50,very thick] (9,4)--(12,6);
    \draw [black!50,very thick] (18,7)--(14,1)--(16,11)--(13,8)--(15,14)--(14,1);
    \plotpermnobox{20}{15, 9, 10, 20, 13, 16, 5, 19, 4, 18, 17, 6, 8, 1, 14, 11, 12, 7, 3, 2}
  \end{tikzpicture}
  \qquad\qquad
  \begin{tikzpicture}[scale=0.2,line join=round]
    \draw [black!50] (1,15)--(4,20);
    \draw [black!50] (2,9)--(3,10)--(5,13)--(15,14);
    \draw [black!50] (7,5)--(12,6)--(13,8);
    \draw [black!50] (14,1)--(19,3);
    \draw [black!50] (1,15)--(6,16)--(8,19);
    \draw [black!50] (10,18)--(6,16)--(11,17);
    \draw [black!50] (3,10)--(16,11)--(17,12);
    \draw [black!50] (12,6)--(18,7);
    \draw [black!50] (14,1)--(20,2);
    \draw [black!50] (3,10)--(4,20);
    \draw [black!50] (5,13)--(6,16);
    \draw [black!50] (8,19)--(7,5)--(10,18)--(9,4)--(11,17)--(7,5);
    \draw [black!50] (9,4)--(12,6);
    \draw [black!50] (18,7)--(14,1)--(16,11)--(13,8)--(15,14)--(14,1);
    \draw [red,very thick] (10,18)--(9,4)--(11,17);
    \draw [red,very thick] (9,4)--(12,6)--(13,8)--(15,14);
    \draw [red,very thick] (12,6)--(18,7);
    \draw [red,very thick] (13,8)--(17,12);
    \draw [thin] (9,4) circle [radius=0.45];
    \plotpermnobox{20}{15, 9, 10, 20, 13, 16, 5, 19, 4, 18, 17, 6, 8, 1, 14, 11, 12, 7, 3, 2}
  \end{tikzpicture}
  \qquad\qquad
  \begin{tikzpicture}[scale=0.2,line join=round]
    \draw [black!50] (1,15)--(4,20);
    \draw [black!50] (2,9)--(3,10)--(5,13)--(15,14);
    \draw [black!50] (7,5)--(12,6)--(13,8);
    \draw [black!50] (14,1)--(19,3);
    \draw [black!50] (1,15)--(6,16)--(8,19);
    \draw [black!50] (10,18)--(6,16)--(11,17);
    \draw [black!50] (3,10)--(16,11)--(17,12);
    \draw [black!50] (12,6)--(18,7);
    \draw [black!50] (14,1)--(20,2);
    \draw [black!50] (3,10)--(4,20);
    \draw [black!50] (5,13)--(6,16);
    \draw [black!50] (8,19)--(7,5)--(10,18)--(9,4)--(11,17)--(7,5);
    \draw [black!50] (9,4)--(12,6);
    \draw [black!50] (18,7)--(14,1)--(16,11)--(13,8)--(15,14)--(14,1);
    \draw [blue,very thick] (1,15)--(6,16)--(5,13)--(3,10)--(2,9);
    \draw [blue,very thick] (6,16)--(8,19)--(7,5);
    \draw [thin] (8,19) circle [radius=0.45];
    \plotpermnobox{20}{15, 9, 10, 20, 13, 16, 5, 19, 4, 18, 17, 6, 8, 1, 14, 11, 12, 7, 3, 2}
  \end{tikzpicture}
  $$
  \caption{The Hasse graph of a $\pdiamond$-avoider, with an up-set and a down-set highlighted}
  \label{figHasseGraphExample}
\end{figure}
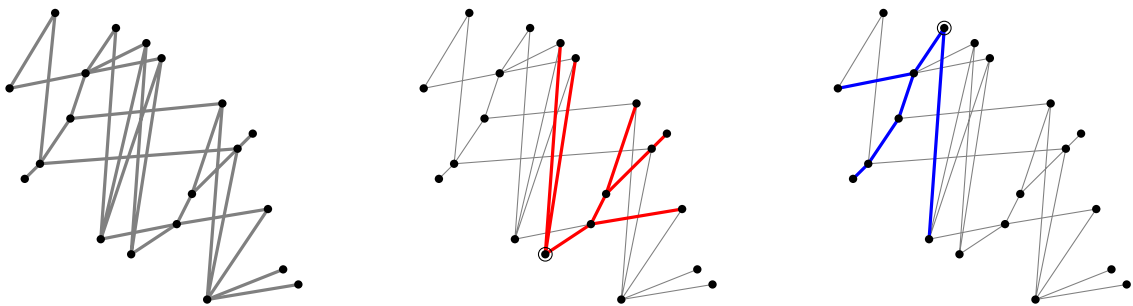
To each permutation $\sigma$, we associate a plane graph $H_\sigma$, which we call its \emph{Hasse graph}.
To create the Hasse graph for a permutation $\sigma=\sigma_1\ldots\sigma_n$, let
vertex $i$ be the point $(i,\sigma_i)$ in the Euclidean plane.
Now, for each pair $i,j$ such that $i<j$, add an edge between vertices $i$ and $j$, if and only if $\sigma(i)<\sigma(j)$ and there is no vertex $k$ such that $i<k<j$ and $\sigma(i)<\sigma(k)<\sigma(j)$.
See Figure~\ref{figHasseGraphExample} for an example.
Note that the edges of $H_\sigma$ correspond to the edges of the Hasse diagram of the sub-poset, $P_\sigma$, of $\mathbb{N}^2$ consisting of the points $(i,\sigma_i)$.
{Hasse graphs of permutations were previously considered by Bousquet-M\'elou \& Butler~\cite{BMB2007} who
determined the algebraic generating function of the class of \emph{forest-like} permutations
whose Hasse graphs are acyclic.}

If a permutation avoids the pattern $\pdiamond$, then its
Hasse graph does not have the diamond graph
$H_\pdiamond =
\raisebox{-2.5pt}{\begin{tikzpicture}[scale=0.12,line join=round]
  \draw[] (1,1)--(2,3)--(4,4)--(3,2)--(1,1);
  \plotpermnobox{}{1,3,2,4}
\end{tikzpicture}}
$
as a minor.
In particular, both up-sets and down-sets of the poset $P_\sigma$ are \emph{trees}.
In other words,
the subgraph of $H_\sigma$
induced by a \emph{left-to-right minimum} of $\sigma$ (a minimal element in the poset $P_\sigma$) and the points to its north-east is a tree, 
as is that
induced by a \emph{right-to-left maximum} of $\sigma$ (a maximal element in the poset $P_\sigma$) and the points to its south-west.
See Figure~\ref{figHasseGraphExample} for an illustration.

\begin{figure}[t]
\begin{center}
  \includegraphics[scale=0.6]{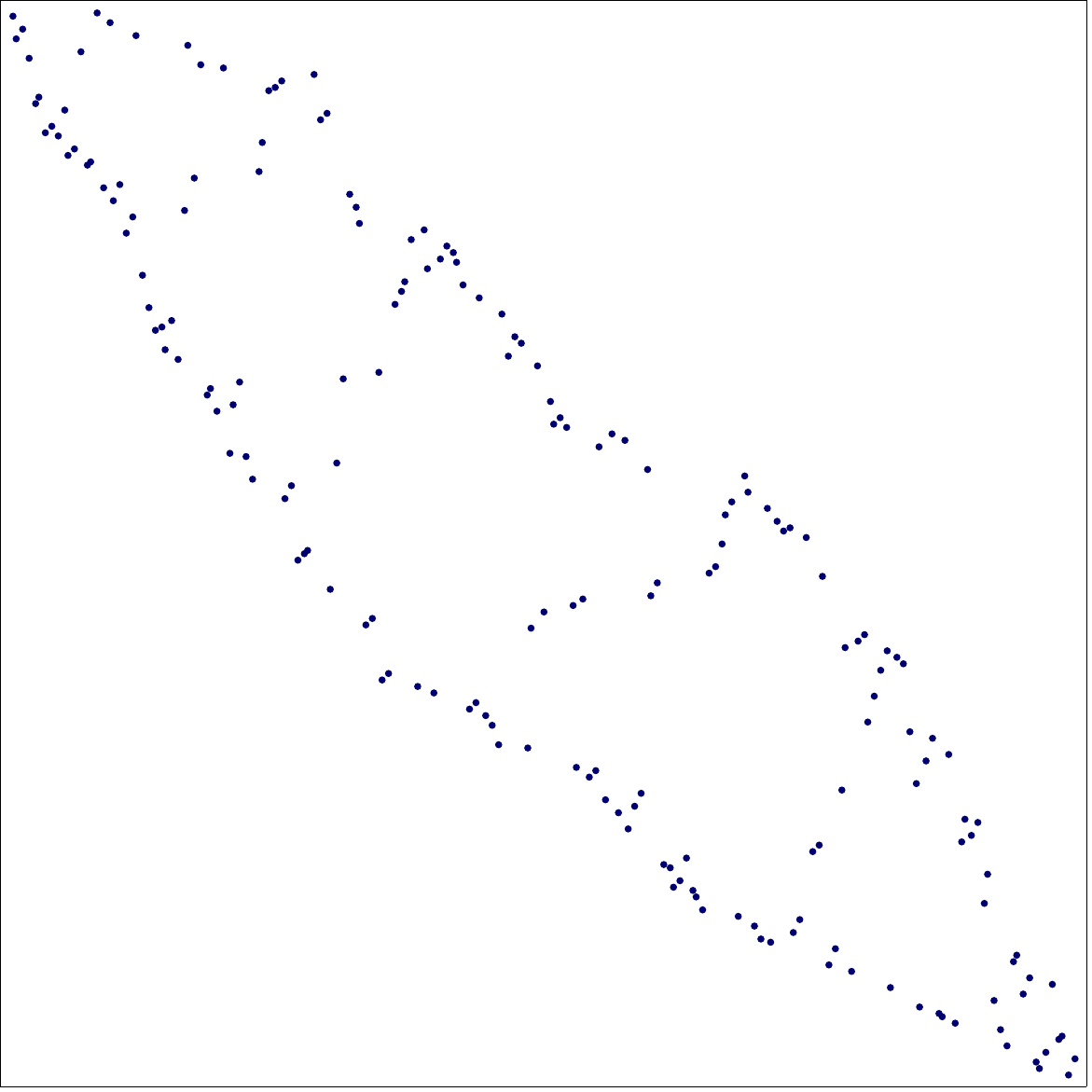}
  $
  \qquad
  $
  \includegraphics[scale=0.6]{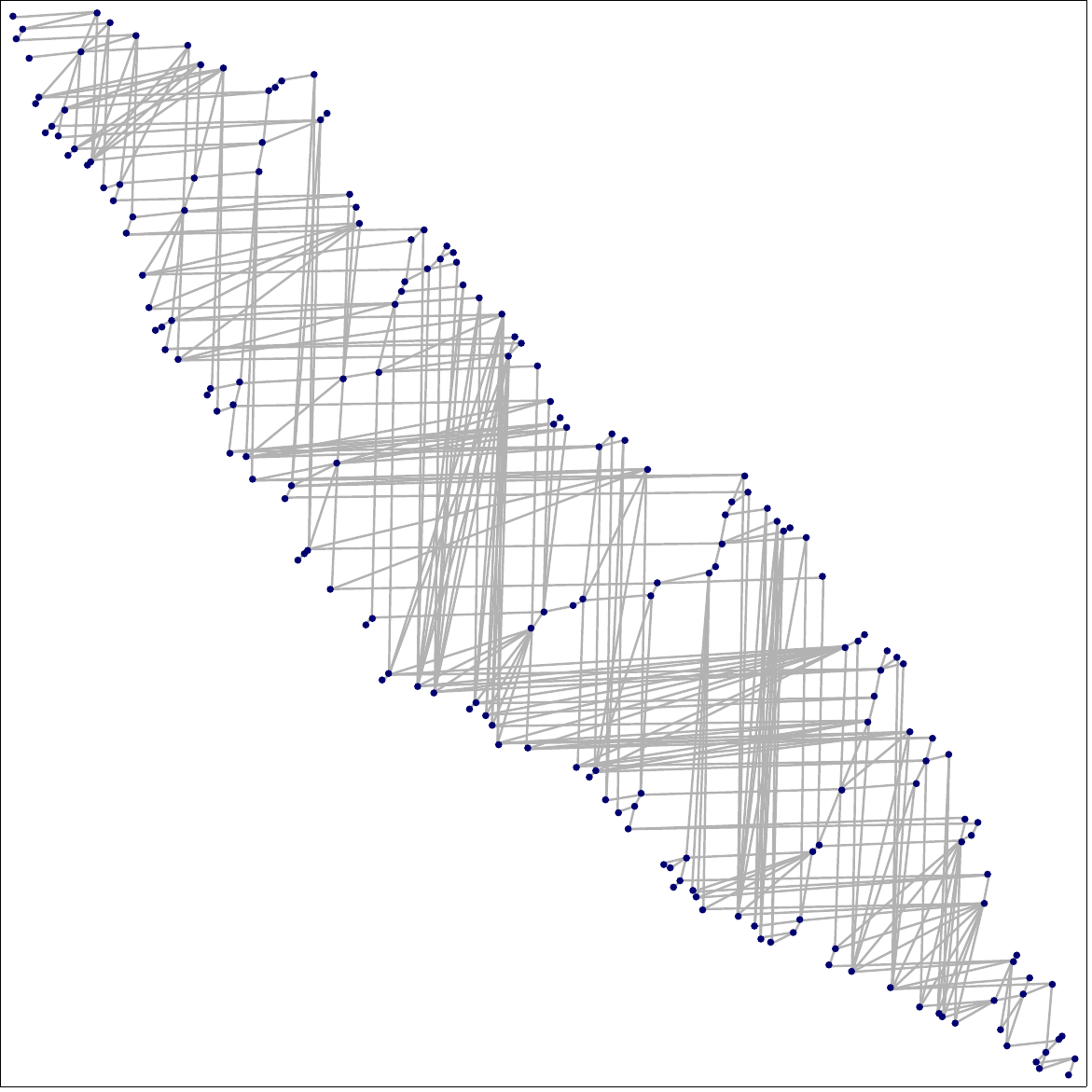}
\end{center}
  \caption{The plot of a $\pdiamond$-avoider of length 187 and its Hasse graph}
  \label{fig187}
\end{figure}

What does a \emph{typical} $\pdiamond$-avoider look like?
Figures~\ref{fig187} and~\ref{figEinar1000} contain illustrations 
of large $\pdiamond$-avoiders.\footnote{The data for Figure~\ref{figEinar1000} was provided by Einar Steingr\'imsson from the investigations he describes in~\cite{Steingrimsson2013} Footnote~4.} As is noted by Flajolet \& Sedgewick (\cite{FS2009} p.169), the fact that a single example can be used to illustrate the asymptotic structure of a large random combinatorial object can be attributed to concentration of distributions, of which we make much use below in determining our lower bound.
Observe the cigar-shaped boundary regions consisting of numerous small subtrees, and also the relative scarcity of points in the interior, which tend to be partitioned into a few paths connecting the two boundaries.
Many questions concerning the
shape of a typical large $\pdiamond$-avoider
remain to be answered or even to be posed precisely.
The recent investigations of
Madras \& Liu~\cite{ML2010} and Atapour \& Madras~\cite{AM2014}
provide a starting point.

We will be restricting our attention to
$\pdiamond$-avoiders whose Hasse graphs are spanned by a disjoint sequence of trees, rooted at alternate boundaries.
In our investigation of how these trees can interact, we consider the asymptotic distribution of certain substructures of the Hasse graphs.
In doing so, we
exploit the fact that plane trees
are in bijection with \emph{{\L}uka\-sie\-wicz paths}.
A {\L}uka\-sie\-wicz path of length $n$ is a
sequence of integers $y_0,\ldots,y_n$ such that $y_0=0$,
$y_i\geqslant1$ for $i\geqslant1$,
and each \emph{step} $s_i=y_i-y_{i-1}\leqslant1$.
Thus, at each step, a {\L}uka\-sie\-wicz path may rise by at most one, but may fall by any amount as long as it doesn't drop to zero or below.

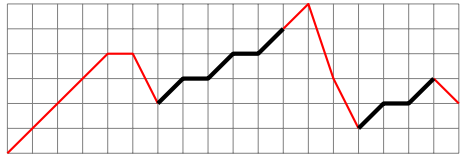
\begin{figure}[ht]
  $$
  \begin{tikzpicture}[scale=0.33,line join=round]
    \draw[help lines] (0,0) grid (18,6);
    \draw[red,thick] (0,0)--(4,4)--(5,4)--(6,2);
    \draw[red,thick] (11,5)--(12,6)--(13,3)--(14,1);
    \draw[red,thick] (17,3)--(18,2);
    \draw[ultra thick] (6,2)--(7,3)--(8,3)--(9,4)--(10,4)--(11,5);
    \draw[ultra thick] (14,1)--(15,2)--(16,2)--(17,3);
  \end{tikzpicture}
  $$
  \caption{The plot of a {\L}uka\-sie\-wicz path that contains three
  occurrences of the pattern $\mathbf{1,0,1}$, two of which overlap}
  \label{figLukaPath}
\end{figure}
In particular, we investigate the distribution of \emph{patterns} in {\L}uka\-sie\-wicz paths.
A pattern $\omega$ of length $m$ in such a path
is a sequence of steps $\omega_1,\ldots,\omega_m$
that occur contiguously in the path (i.e. there is some $k\geqslant0$ such that $\omega_j=s_{k+j}$ for $1\leqslant j\leqslant m$),
with the restriction that the \emph{height}
$\sum_{j=1}^i \omega_j$ after the $i$th step is positive
for $1\leqslant i\leqslant m$.
Note that multiple occurrences of a given pattern may overlap in a {\L}uka\-sie\-wicz path. See Figure~\ref{figLukaPath} for an illustration.

\begin{figure}[htp]
\begin{center}
  \includegraphics[scale=0.38]{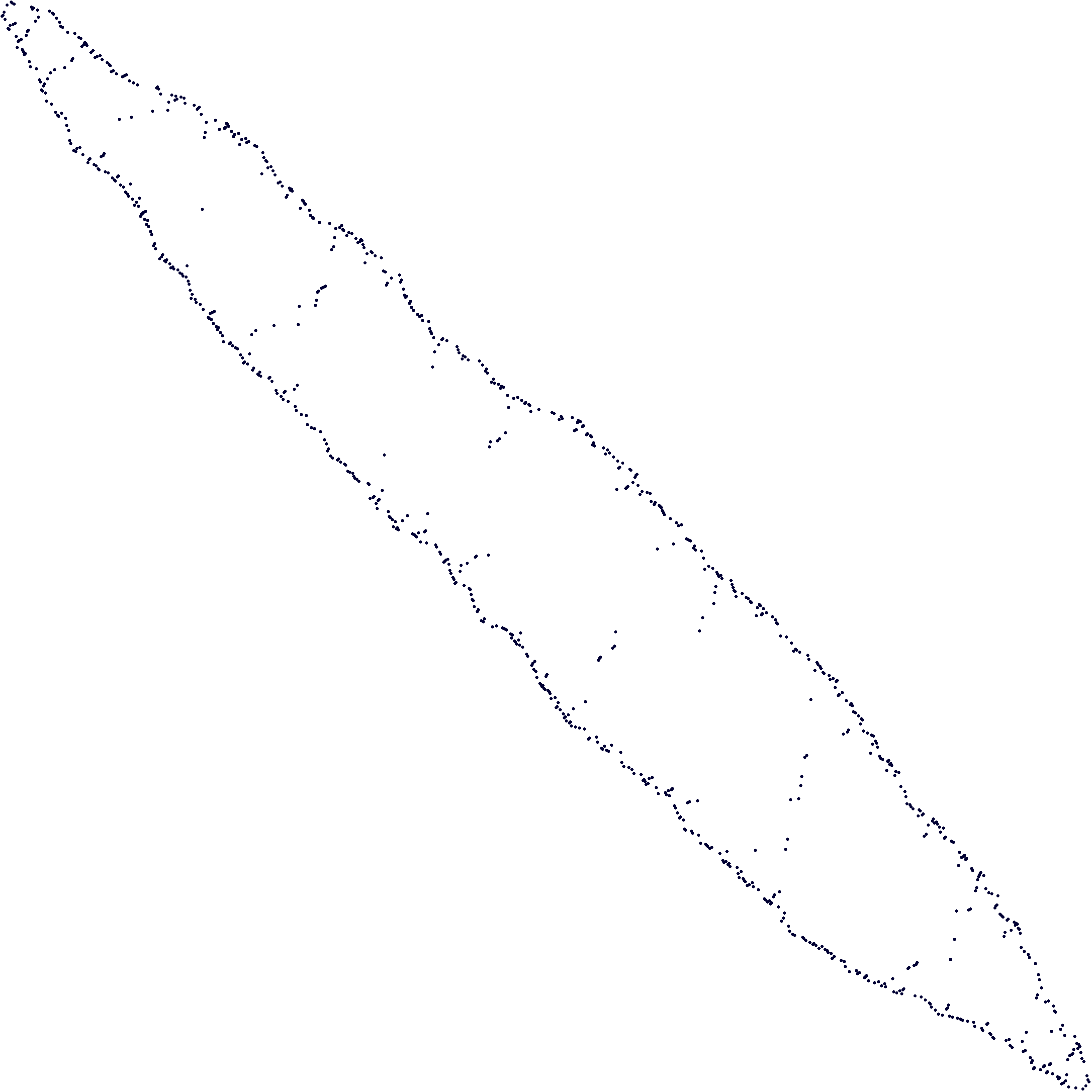}
\end{center}
\begin{center}
  \includegraphics[scale=0.36]{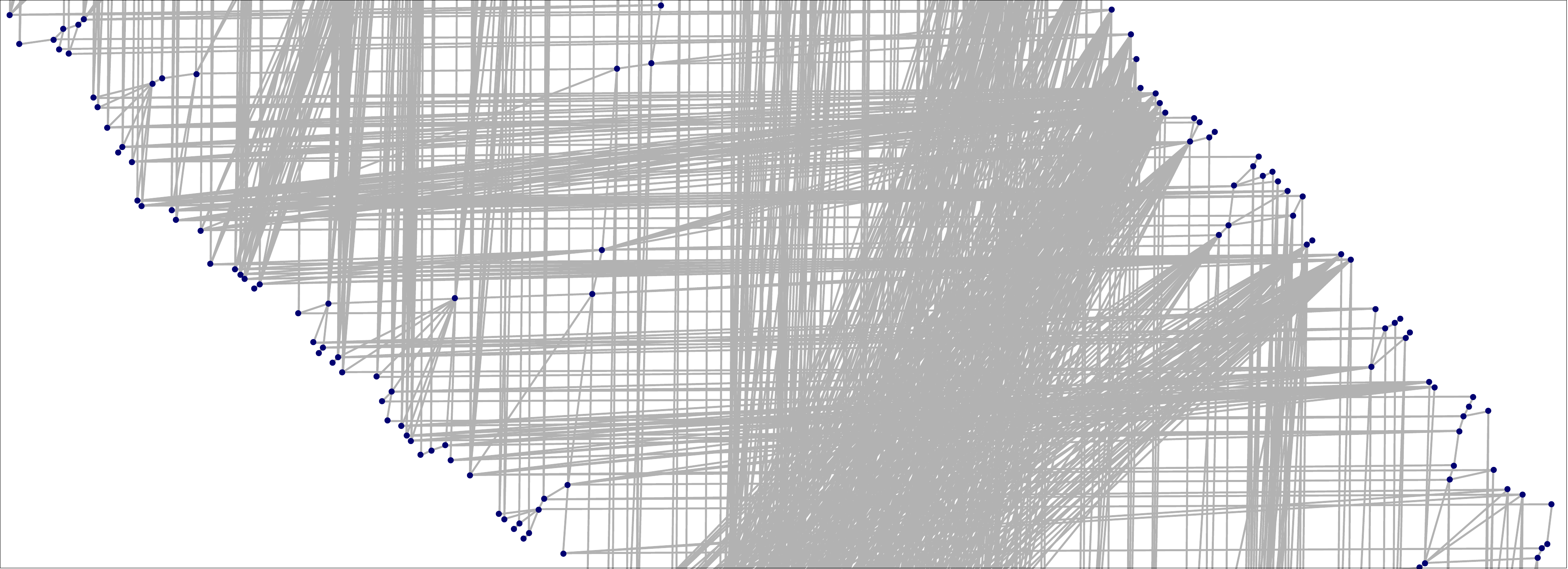}
\end{center}
\caption{The plot of a $\pdiamond$-avoider of length 1000 
and part of its Hasse graph}
\label{figEinar1000}
\end{figure}

\newpage
Under very general conditions, 
substructures of recursively defined combinatorial classes can be shown to be distributed normally in the limit.
By generalising the correlation polynomial of Guibas \& Odlyzko, 
and combining it with an application of the kernel method,
we prove that patterns in {\L}uka\-sie\-wicz paths also satisfy the conditions necessary for asymptotic normality:

\thmbox{
\begin{thm}\label{thmLukaPatternsGaussian}
The number of occurrences of a fixed pattern
in a {\L}uka\-sie\-wicz path of length $n$
exhibits
a Gaussian limit distribution
with mean
and standard deviation
asymptotically linear in $n$.
\end{thm}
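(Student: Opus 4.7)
The plan is to encode the joint distribution of length and pattern occurrences by a bivariate generating function $F(z,u)=\sum_{n,k}f_{n,k}\,z^n u^k$, where $f_{n,k}$ counts {\L}uka\-sie\-wicz paths of length $n$ containing exactly $k$ occurrences of the fixed pattern $\omega$, and then to apply a quasi-powers theorem of the Bender--Richmond--Hwang type to extract a Gaussian limit law. Such a theorem requires that $F(z,u)$ possess, for $u$ close to $1$, a dominant singularity $\rho(u)$ that depends analytically on $u$ and is of a fixed (here square-root) type, together with a non-degeneracy (variability) condition; the conclusion is then Gaussian convergence with asymptotic mean and variance of the orders stated in the theorem.

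To produce such an $F$, I would first generalise the Guibas--Odlyzko correlation polynomial to {\L}uka\-sie\-wicz paths. Two occurrences of $\omega$ can overlap exactly when a suffix of its step-sequence coincides with a prefix of the same sequence, so its autocorrelation set is defined just as for strings; the new ingredient is that, when overlapping copies are agglomerated into clusters, one must simultaneously track the height profile so that the ambient path remains strictly positive. A Goulden--Jackson style symbolic decomposition then expresses a {\L}uka\-sie\-wicz path marked by its occurrences of $\omega$ as an alternating sequence of free steps and clusters, leading to a linear functional equation of kernel-method shape
\[
K(z,v,u)\,G(z,v,u) \;=\; A(z,v,u) \;-\; B(z,u)\,G(z,0,u),
\]
where $v$ tracks the final height. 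Substituting for $v$ the small branch $v_0(z,u)$ of $K(z,v,u)=0$ eliminates the unknown boundary series $G(z,0,u)$ and presents $F(z,u)=G(z,1,u)$ as an algebraic function of $z$ and $u$.

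At $u=1$ the generating function reduces to the usual algebraic generating function of {\L}uka\-sie\-wicz paths, whose dominant singularity is a square-root branch point at some $\rho>0$ arising from the vanishing of $\partial_v K$ at $v_0$; by the implicit function theorem this branch point persists and moves analytically as $\rho(u)$ for $u$ in a neighbourhood of $1$, retaining its type. Standard transfer theorems (Flajolet--Sedgewick, Chapter~IX) then yield uniform quasi-powers asymptotics for $[z^n]F(z,u)$, and Hwang's theorem converts them into the desired central limit law. It remains to verify the variability condition $\sigma^2>0$, which I would establish combinatorially: at a positive density of positions in a uniformly random path of length $n$, one can locally splice in or remove a prescribed copy of $\omega$, compensated by an elementary step, producing perturbations that shift the occurrence count by one and force fluctuations of the correct order. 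The principal obstacle I expect is the second step: marshalling the cluster decomposition and the positivity constraint into a kernel equation that is genuinely linear in a single unknown boundary function, in a way that correctly preserves the marking by $u$. Once this is in place, the remaining analytic steps follow the standard template.
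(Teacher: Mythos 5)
Your proposal follows essentially the same route as the paper: a height-aware generalisation of the Guibas--Odlyzko autocorrelation polynomial, a Goulden--Jackson cluster decomposition to obtain a linear functional equation, elimination of the boundary series by the kernel method, and then the algebraic-singularity/quasi-powers machinery (the paper packages your implicit-function-theorem-plus-Hwang step into a single citation of Flajolet--Sedgewick Proposition~IX.17/Drmota's theorem for polynomial functional equations $y=\Phi(z,u,y)$). The one obstacle you flag---casting the cluster decomposition together with the positivity constraint as a kernel equation that preserves the $u$-marking---is resolved in the paper exactly as you anticipate, by letting the autocorrelation polynomial carry a second variable $y^{h_i(\omega)}$ recording the height at each self-overlap.
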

}

In the next section, we introduce certain subsets of $\av(\pdiamond)$ for consideration, which consist of permutations having a particularly regular structure, and explore restrictions on their structure.
We follow this in Section~\ref{sectConcentration} by looking at a number of parameters that record the distribution of substructures in our permutations.
Key to our result is the fact that these are asymptotically concentrated, and
in this section
we
prove three of the four concentration results we need.
Section~\ref{sectLukasPatterns} is reserved for the proof of Theorem~\ref{thmLukaPatternsGaussian}, concerning the distribution of patterns in {\L}uka\-sie\-wicz paths. This section may be read independently of the rest of the paper.
To conclude,
in Section~\ref{sectLowerBound},
we use Theorem~\ref{thmLukaPatternsGaussian} to prove our final concentration result, and then
pull everything together to calculate a lower bound for $\gr(\av(\pdiamond))$, thus proving
Theorem~\ref{thm1324LowerBound}.

\section{Permutations with a regular structure}\label{sectW}

In this section, we present the structure and substructures of the permutations that we will be investigating.
Let $\WWW$ be the set of all
permutations avoiding $\pdiamond$
whose Hasse graphs are spanned by a sequence of trees
rooted alternately at the lower left and the upper right.
See Figure~\ref{fig157} for an example.

\begin{figure}[ht]
\begin{center}
  \includegraphics[scale=0.43]{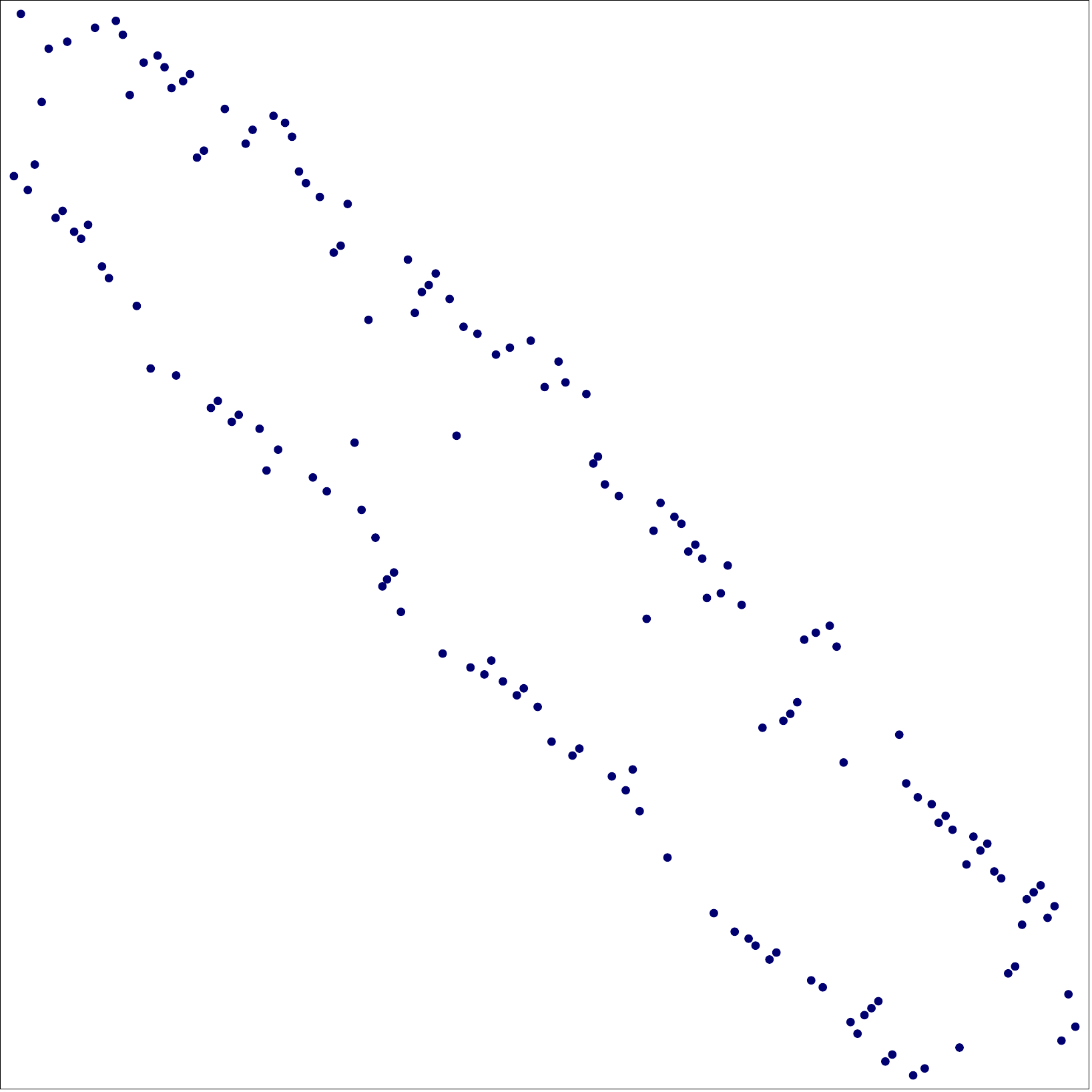}
  $
  \qquad
  $
  \includegraphics[scale=0.43]{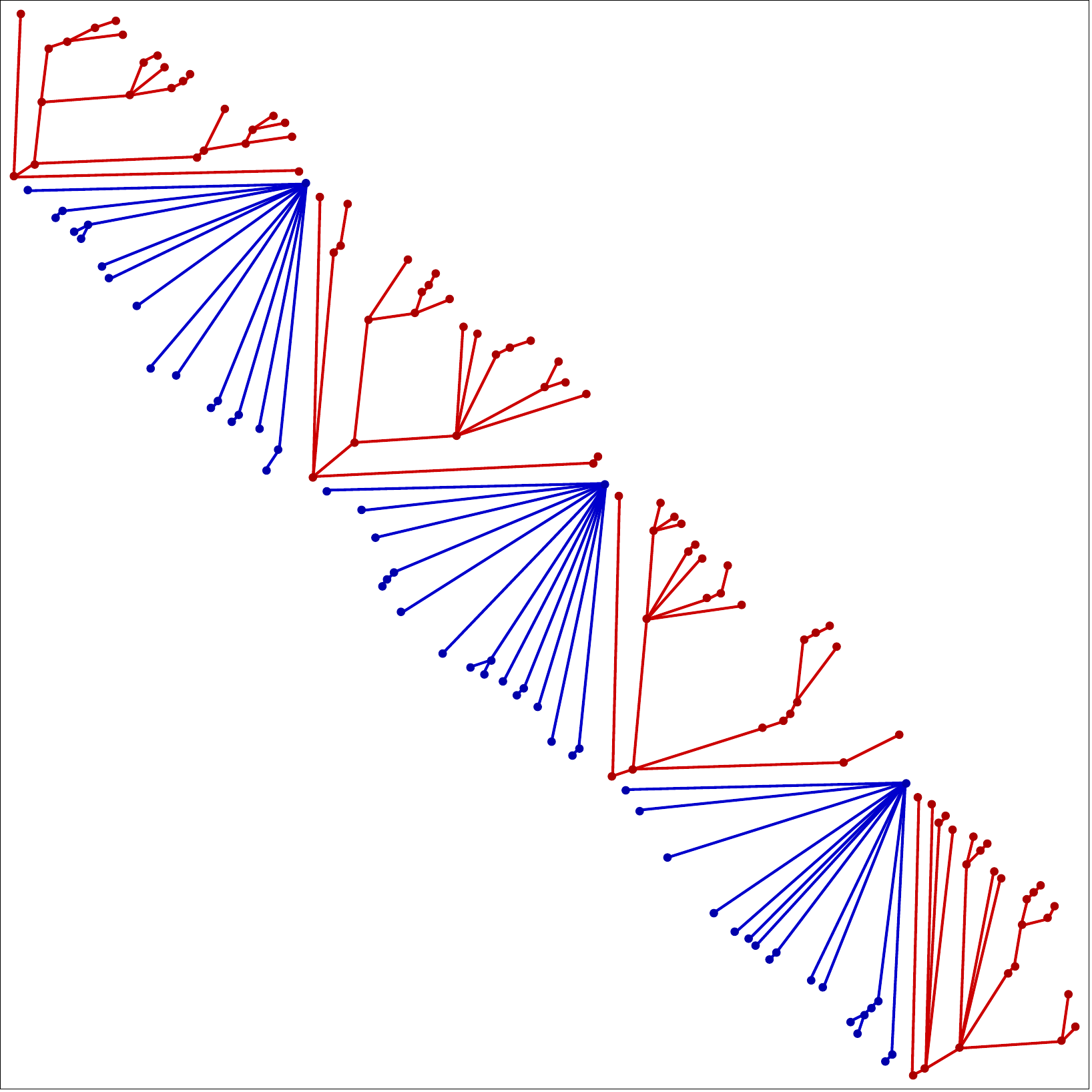}
\end{center}
  \caption{A permutation 
  in $\WWW(3,25,19,12)$ and the spanning of its Hasse graph by red and blue trees}
  \label{fig157}
\end{figure}
Trees rooted at a left-to-right minimum we colour red, and trees rooted at a right-to-left maximum we colour blue. We refer to these as \emph{red trees} and \emph{blue trees} respectively.
As a mnemonic, note that Red trees grow towards the Right and bLue trees grow towards the Left.

Observe that the root of each non-initial blue tree is the uppermost point below the root of the previous red tree, and the root of each non-initial red tree is the leftmost point to the right of the root of the previous blue tree.
Note that $\WWW$ does not contain every $\pdiamond$-avoider. For example, $\mathbf{2143}\notin\WWW$.

We consider elements of $\WWW$ with a particularly regular structure.
Each red tree will have the same number of vertices.
Similarly, each blue tree will have the same number of vertices. Moreover, every blue tree also will have the same root degree.
Specifically,
for any positive $t$, $k$, $\ell$ and~$d$, let $\WWW(t,k,\ell,d)$ be the
set of those permutations in $\WWW$ which
satisfy the following four conditions:
\begin{bulletnums}
  \item Its Hasse graph is spanned by $t+1$ red trees and $t$ blue trees.
  \item Each red tree has $k$ vertices.
  \item Each blue tree has $\ell$ vertices.
  \item Each blue tree has root degree $d$ .
\end{bulletnums}
See Figure~\ref{fig157} for an illustration of a permutation in $\WWW(3,25,19,12)$.

To simplify our presentation,
we will use the term \emph{blue subtree} to denote a principal subtree of a blue tree.
(The principal subtrees of a rooted tree are the connected components resulting from deleting the root.)
Thus each blue tree consists of a root vertex and a sequence of $d$ blue subtrees.
We will also refer to the roots of blue subtrees simply as \emph{blue roots}.

Our goal is to determine a lower bound for the growth rate of
the union of all the $\WWW(t,k,\ell,d)$.
To achieve this, our focus will be on sets in which the number and sizes of the trees
grow together along with
the root degree of blue trees.
Specifically, we consider
the parameterised sets
$$
\WWW_{\lambda,\delta}(k)
\;=\;
\WWW(k,k,\ceil{\lambda\+k},\ceil{\delta\+\lambda\+k}),
$$
for some $\lambda>0$ and $\delta\in(0,1)$,
consisting of $k+1$ $k$-vertex red trees and $k$ $\ceil{\lambda\+k}$-vertex blue trees each having root degree $\ceil{\delta\+\lambda\+k}$.
Thus,
$\lambda$ is the asymptotic ratio of the size of blue trees to red trees,
and $\delta$ is the limiting ratio of the root degree of each blue tree to its size.
Note that, asymptotically,
$1/\delta$
is the mean number of vertices in a blue subtree. Typically these subtrees will be small.

Let $g(\lambda,\delta)$ denote the upper growth rate
of $\bigcup_k\! \WWW_{\lambda,\delta}(k)$:
$$
g(\lambda,\delta)
\;=\;
\liminfty[k]\big|\WWW_{\lambda,\delta}(k)\big|^{\nfrac{1}{n(k,\lambda)}},
$$
where
$n(k,\lambda) = k\+\big(k+\ceil{\lambda\+k}+1\big)$ is the length of each permutation in $\WWW_{\lambda,\delta}(k)$.
In order to prove Theorem~\ref{thm1324LowerBound},
we will show that there is some
$\lambda$ and $\delta$ for which
$g(\lambda,\delta) > 9.81$.

\newpage
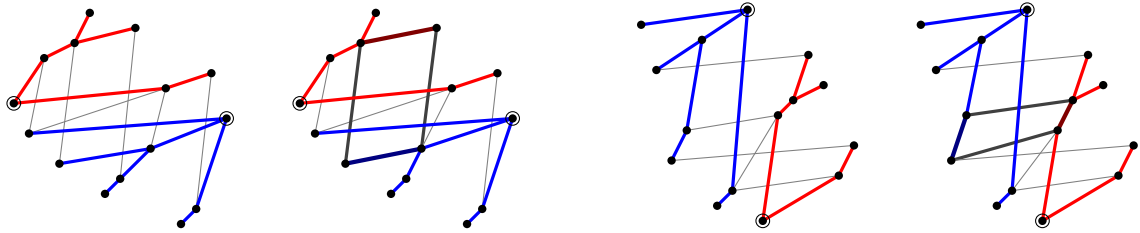
\begin{figure}[ht]
  $$
  \begin{tikzpicture}[scale=0.2,line join=round]
    \draw [black!50] (3,12)--(2,7)--(11,10);
    \draw [black!50] (5,13)--(4,5);
    \draw [black!50] (8,4)--(9,14);
    \draw [black!50] (10,6)--(11,10);
    \draw [black!50] (13,2)--(14,11);
    \draw [red,very thick] (6,15)--(5,13)--(3,12)--(1,9)--(11,10)--(14,11);
    \draw [red,very thick] (9,14)--(5,13);
    \draw [blue,very thick] (2,7)--(15,8)--(10,6)--(4,5);
    \draw [blue,very thick] (10,6)--(8,4)--(7,3);
    \draw [blue,very thick] (15,8)--(13,2)--(12,1);
    \plotpermnobox{15}{9,7,12,5,13,15,3,4,14,6,10,1,2,11,8}
    \draw [thin] (1,9) circle [radius=0.45];
    \draw [thin] (15,8) circle [radius=0.45];
  \end{tikzpicture}
  \qquad
  \begin{tikzpicture}[scale=0.2,line join=round]
    \draw [black!50] (3,12)--(2,7)--(11,10);
    \draw [black!75,very thick] (5,13)--(4,5);
    \draw [black!75,very thick] (9,6)--(10,14);
    \draw [black!50] (9,6)--(11,10);
    \draw [black!50] (13,2)--(14,11);
    \draw [red,very thick] (6,15)--(5,13)--(3,12)--(1,9)--(11,10)--(14,11);
    \draw [red!50!black,ultra thick] (10,14)--(5,13);
    \draw [blue,very thick] (2,7)--(15,8)--(9,6);
    \draw [blue!50!black,ultra thick] (9,6)--(4,5);
    \draw [blue,very thick] (9,6)--(8,4)--(7,3);
    \draw [blue,very thick] (15,8)--(13,2)--(12,1);
    \plotpermnobox{15}{9,7,12,5,13,15,3,4,6,14,10,1,2,11,8}
    \draw [thin] (1,9) circle [radius=0.45];
    \draw [thin] (15,8) circle [radius=0.45];
  \end{tikzpicture}
  \qquad\qquad
  \begin{tikzpicture}[scale=0.2,line join=round]
    \draw [black!50] (2,11)--(12,12);
    \draw [black!50] (4,7)--(10,8);
    \draw [black!50] (3,5)--(15,6);
    \draw [black!50] (10,8)--(7,3)--(14,4);
    \draw [blue,very thick] (1,14)--(8,15)--(5,13)--(2,11);
    \draw [blue,very thick] (5,13)--(4,7)--(3,5);
    \draw [blue,very thick] (8,15)--(7,3)--(6,2);
    \draw [red,very thick] (12,12)--(11,9)--(10,8)--(9,1)--(14,4)--(15,6);
    \draw [red,very thick] (13,10)--(11,9);
    \plotpermnobox{15}{14,11,5,7,13,2,3,15,1,8,9,12,10,4,6}
    \draw [thin] (9,1) circle [radius=0.45];
    \draw [thin] (8,15) circle [radius=0.45];
  \end{tikzpicture}
  \qquad
  \begin{tikzpicture}[scale=0.2,line join=round]
    \draw [black!50] (2,11)--(12,12);
    \draw [black!75,very thick] (3,5)--(10,7);
    \draw [black!75,very thick] (4,8)--(11,9);
    \draw [black!50] (3,5)--(15,6);
    \draw [black!50] (10,7)--(7,3)--(14,4);
    \draw [blue,very thick] (1,14)--(8,15)--(5,13)--(2,11);
    \draw [blue,very thick] (5,13)--(4,8);
    \draw [blue!50!black,ultra thick] (4,8)--(3,5);
    \draw [blue,very thick] (8,15)--(7,3)--(6,2);
    \draw [red,very thick] (12,12)--(11,9);
    \draw [red,very thick] (10,7)--(9,1)--(14,4)--(15,6);
    \draw [red!50!black,ultra thick] (11,9)--(10,7);
    \draw [red,very thick] (13,10)--(11,9);
    \plotpermnobox{15}{14,11,5,8,13,2,3,15,1,7,9,12,10,4,6}
    \draw [thin] (9,1) circle [radius=0.45];
    \draw [thin] (8,15) circle [radius=0.45];
  \end{tikzpicture}
  $$
  \caption{Valid and invalid horizontal interleavings,
            and valid and invalid vertical interleavings;
            occurrences of $\pdiamond$ are shown with thicker edges}
  \label{figInterleaveExample}
\end{figure}
$\WWW(t,k,\ell,d)$ consists precisely of those permutations that
can be built by starting with a $k$-vertex red tree
and repeating the following two steps exactly $t$ times (see Figure~\ref{fig157}):
\begin{bulletnums}
\item Place an $\ell$-vertex blue tree with root degree $d$
below the previous red tree (with its root to the right of the red tree),
horizontally interleaving its non-root vertices with the non-root vertices of the previous red tree in any way that avoids creating a $\pdiamond$.
\item Place a $k$-vertex red tree
to the right of the previous blue tree (with its root below the blue tree),
vertically interleaving its non-root vertices with the non-root vertices of the previous blue tree without creating a $\pdiamond$.
\end{bulletnums}
See Figure~\ref{figInterleaveExample} for illustrations of valid interleavings of the non-root vertices of red and blue trees, and also of invalid interleavings containing occurrences of $\pdiamond$.
The configurations that have to be avoided when interleaving are
shown schematically in Figure~\ref{fig1324Causes}.

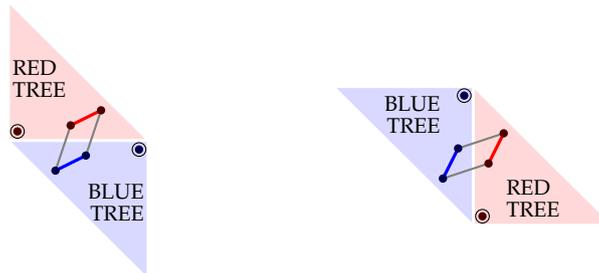
\begin{figure}[ht]
$$
\begin{tikzpicture}[line join=round,scale=0.2]
  \path [fill=blue!15] (7,-6) -- (7,2.9) -- (-1.9,2.9) -- (7,-6);
  \node[left] at (7.5,-0.4) {\scriptsize BLUE};
  \node[left] at (7.5,-1.85) {\scriptsize TREE};
  \path [fill=red!15] (6.9,3.1) -- (-2,3.1) -- (-2,12) -- (6.9,3.1);
  \node[right] at (-2.5,7.85) {\scriptsize RED};
  \node[right] at (-2.5,6.4) {\scriptsize TREE};
  \draw [red,very thick] (2,4)--(4,5);
  \draw [blue,very thick] (1,1)--(3,2);
  \draw [black!50, thick] (1,1)--(2,4);
  \draw [black!50, thick] (3,2)--(4,5);
  \plotpermnobox[blue!30!black]{}{1,0,2,0}
  \plotpermnobox[red!30!black]{}{0,4,0,5}
  \fill[blue!30!black,radius=0.275] (6.5,2.4) circle;
  \fill[red!30!black,radius=0.275] (-1.5,3.6) circle;
  \draw [thin] (6.5,2.4)  circle [radius=0.45];
  \draw [thin] (-1.5,3.6) circle [radius=0.45];
\end{tikzpicture}
\qquad\qquad\qquad
\raisebox{19pt}{
\begin{tikzpicture}[line join=round,scale=0.2]
  \path [fill=blue!15] (-6,7) -- (2.9,7) -- (2.9,-1.9) -- (-6,7);
  \node[left] at (1.5,6.0) {\scriptsize BLUE};
  \node[left] at (1.5,4.55) {\scriptsize TREE};
  \path [fill=red!15] (3.1,6.9) -- (3.1,-2) -- (12,-2) -- (3.1,6.9);
  \node[right] at (4.5,0.45) {\scriptsize RED};
  \node[right] at (4.5,-1.0) {\scriptsize TREE};
  \draw [red,very thick] (4,2)--(5,4);
  \draw [blue,very thick] (1,1)--(2,3);
  \draw [black!50, thick] (1,1)--(4,2);
  \draw [black!50, thick] (2,3)--(5,4);
  \plotpermnobox[blue!30!black]{}{1,3}
  \plotpermnobox[red!30!black]{}{0,0,0,2,4}
  \fill[blue!30!black,radius=0.275] (2.4,6.5) circle;
  \fill[red!30!black,radius=0.275] (3.6,-1.5) circle;
  \draw [thin] (2.4,6.5)  circle [radius=0.45];
  \draw [thin] (3.6,-1.5) circle [radius=0.45];
\end{tikzpicture}
}
$$
\caption{Possible causes of a $\pdiamond$ when interleaving horizontally and vertically}
\label{fig1324Causes}
\end{figure}

We will simply call a valid interleaving of the non-root vertices of a red tree with those of a blue tree an \emph{interleaving} of the trees.
Note that the choice of interleaving at each step is completely independent of the interleaving at any previous or subsequent
step. The only requirement is that no $\pdiamond$ is created by any of the interleavings.

The key to our result is thus an analysis of how vertices of red and blue trees may be interleaved without forming a $\pdiamond$. The remainder of the paper consists of this analysis.

In what follows, we will be working exclusively with interleavings of red and blue trees in elements of $\WWW_{\lambda,\delta}(k)$,
for some given
$\lambda>0$ and $\delta\in(0,1)$.
Thus, we will assume, without restatement, that a red tree has $k$ vertices, and that a blue tree has $\ell=\ceil{\lambda\+k}$ vertices and is composed of $d=\ceil{\delta\+\lambda\+k}$ blue subtrees.

We now consider how to avoid creating a $\pdiamond$.
Without loss of generality, we will limit our discussion to the horizontal case.

\begin{figure}[ht]
  $$
  \begin{tikzpicture}[scale=0.2,line join=round]
    \path [fill=blue!15] (5.65,0.5) rectangle (6.35,32.5);
    \path [fill=blue!15] (6.65,0.5) rectangle (8.35,32.5);
    \path [fill=blue!15] (10.65,0.5) rectangle (11.35,32.5);
    \path [fill=blue!15] (12.65,0.5) rectangle (15.35,32.5);
    \path [fill=blue!15] (18.65,0.5) rectangle (19.35,32.5);
    \path [fill=blue!15] (22.65,0.5) rectangle (26.35,32.5);
    \path [fill=blue!15] (26.65,0.5) rectangle (27.35,32.5);
    \path [fill=blue!15] (29.65,0.5) rectangle (31.35,32.5);
    \draw [gray,very thin] (6,15)--(9,30);
    \draw [gray,very thin] (6,15)--(10,29);
    \draw [gray,very thin] (6,15)--(12,27);
    \draw [gray,very thin] (6,15)--(16,24);
    \draw [gray,very thin] (6,15)--(20,21);
    \draw [gray,very thin] (6,15)--(22,18);
    \draw [gray,very thin] (6,15)--(29,19);
    \draw [gray,very thin] (8,14)--(9,30);
    \draw [gray,very thin] (8,14)--(10,29);
    \draw [gray,very thin] (8,14)--(12,27);
    \draw [gray,very thin] (8,14)--(16,24);
    \draw [gray,very thin] (8,14)--(20,21);
    \draw [gray,very thin] (8,14)--(22,18);
    \draw [gray,very thin] (8,14)--(29,19);
    \draw [gray,very thin] (11,12)--(12,27);
    \draw [gray,very thin] (11,12)--(16,24);
    \draw [gray,very thin] (11,12)--(20,21);
    \draw [gray,very thin] (11,12)--(22,18);
    \draw [gray,very thin] (11,12)--(29,19);
    \draw [gray,very thin] (15,11)--(16,24);
    \draw [gray,very thin] (15,11)--(20,21);
    \draw [gray,very thin] (15,11)--(22,18);
    \draw [gray,very thin] (15,11)--(29,19);
    \draw [gray,very thin] (20,21)--(19,8);
    \draw [gray,very thin] (22,18)--(19,8);
    \draw [gray,very thin] (29,19)--(19,8);
    \draw [gray,very thin] (26,7)--(28,20);
    \draw [gray,very thin] (26,7)--(29,19);
    \draw [gray,very thin] (27,3)--(28,20);
    \draw [gray,very thin] (27,3)--(29,19);
    \draw [gray,very thin] (6,15)--(18,23);
    \draw [gray,very thin] (8,14)--(18,23);
    \draw [gray,very thin] (11,12)--(18,23);
    \draw [gray,very thin] (15,11)--(18,23);
    \draw [red,very thick] (1,17)--(18,23);
    \draw [blue,very thick] (19,8)--(32,16);
    \fill [blue!80!white,thin] (6,15)  circle [radius=0.45];
    \fill [blue!80!white,thin] (8,14)  circle [radius=0.45];
    \fill [blue!80!white,thin] (11,12) circle [radius=0.45];
    \fill [blue!80!white,thin] (15,11) circle [radius=0.45];
    \fill [blue!80!white,thin] (19,8)  circle [radius=0.45];
    \fill [blue!80!white,thin] (26,7)  circle [radius=0.45];
    \fill [blue!80!white,thin] (27,3)  circle [radius=0.45];
    \fill [blue!80!white,thin] (31,2)  circle [radius=0.45];
    \draw [red,very thick] (1,17)--(2,31)--(3,32);
    \draw [red,very thick] (1,17)--(16,24)--(17,25);
    \draw [red,very thick] (1,17)--(4,26)--(5,28)--(9,30);
    \draw [red,very thick] (5,28)--(10,29);
    \draw [red,very thick] (4,26)--(12,27);
    \draw [red,very thick] (22,18)--(29,19);
    \draw [red,very thick] (1,17)--(20,21)--(21,22);
    \draw [red,very thick] (1,17)--(22,18)--(28,20);
    \draw [blue,very thick] (6,15)--(32,16);
    \draw [blue,very thick] (7,13)--(8,14)--(32,16);
    \draw [blue,very thick] (11,12)--(32,16);
    \draw [blue,very thick] (13,10)--(15,11)--(32,16);
    \draw [blue,very thick] (14,9)--(15,11);
    \draw [blue,very thick] (24,4)--(25,5)--(26,7)--(32,16);
    \draw [blue,very thick] (23,6)--(26,7);
    \draw [blue,very thick] (27,3)--(32,16);
    \draw [blue,very thick] (30,1)--(31,2)--(32,16);
    \draw [thin] (1,17)  circle [radius=0.4];
    \draw [thin] (32,16)  circle [radius=0.4];
    \plotpermnobox[red!30!black]{32}{17, 31, 32, 26, 28,  0,  0,  0, 30, 29,  0, 27,  0, 0,  0, 24, 25, 23, 0, 21, 22, 18, 0, 0, 0, 0, 0, 20, 19, 0, 0,  0}
    \plotpermnobox[blue!30!black]{32}{ 0,  0,  0,  0,  0, 15, 13, 14,  0,  0, 12,  0, 10, 9, 11,  0,  0,  0, 8,  0,  0,  0, 6, 4, 5, 7, 3, 0,  0, 1, 2, 16}
  \end{tikzpicture}
  $$
  \caption{An interleaving of red and blue trees in which no blue subtree (shown in a shaded rectangle) is split by a red vertex} 
  \label{figPartialInterleaving}
\end{figure}
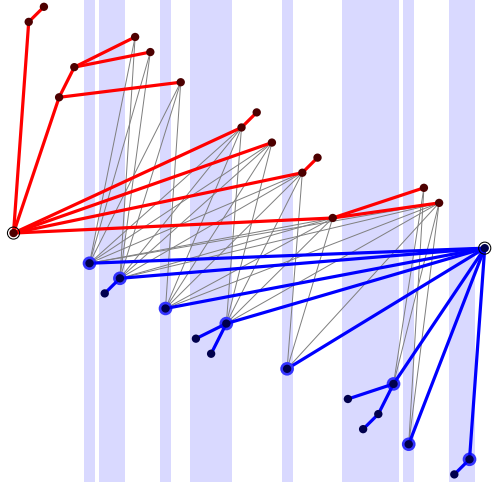
One way to guarantee that no $\pdiamond$ is created when interleaving trees is to ensure that no blue subtree is split by a red vertex, since the pattern $\pdiamond$ is avoided in any interleaving in which no red vertex occurs between two blue vertices of the same blue subtree. See Figure~\ref{figPartialInterleaving} for an illustration.

Let $\WWW^0_{\lambda,\delta}(k)$ be the subset of $\WWW_{\lambda,\delta}(k)$ in which red vertices are interleaved with blue subtrees in this manner in each interleaving.

$\WWW^0_{\lambda,\delta}(k)$ is easy to enumerate since trees and interleavings can be chosen independently. Indeed,
$$
\big| \WWW^0_{\lambda,\delta}(k) \big| \;=\; R_k^{\,k+1} \times B_k^{\,k} \times P_k^{\,2k},
$$
where $R_k$ is the number of distinct red trees, $B_k$ is the number of distinct blue trees and $P_k$ is the number of distinct 
ways of interleaving red vertices with blue subtrees.

$R_k=\frac{1}{k}\+\binom{2\+k-2}{k-1}$,
$B_k=\frac{d}{\ell-1}\+\binom{2\ell-3-d}{\ell-2}$ (see~\cite{FS2009}~Example~III.8),
and
$P_k = \binom{k-1 + d}{d}$.
Hence, by applying Stirling's approximation we obtain the following expression for the growth rate of $\WWW^0_{\lambda,\delta}(k)$:
\begin{equation}\label{eqW0GR}
g_0(\lambda,\delta)
\;=\;
\liminfty[k]\big|\WWW^0_{\lambda,\delta}(k)\big|^{\nfrac{1}{n(k,\lambda)}}
\;=\;
E(\lambda,\delta)^{\nfrac{1}{(1+\lambda)}},
\end{equation}
where
\begin{equation}\label{eqEDef}
  E(\lambda,\delta) \;=\;
  4 \+ \frac{(2-\delta)^{(2-\delta)\lambda}}{(1-\delta)^{(1-\delta)\lambda}} \+ \frac{(1+\delta\+\lambda)^{2(1+\delta\lambda)}}{(\delta\+\lambda)^{2\delta\lambda}} .
\end{equation}
It is now elementary to determine the maximum value of this growth rate.
For fixed $\lambda$, $E(\lambda,\delta)$ is maximal when $\delta$ has the value
$$
  \delta_\lambda \;=\; \frac{2\+ \lambda -1+\sqrt{1+4\+ \lambda +8\+ \lambda ^2}}{2\+ \lambda\+  (2+\lambda )}.
$$
Thence,
numerically maximising $g_0(\lambda,\delta)$ by setting $\lambda\approx0.61840$ (with $\delta_\lambda\approx0.86238$) yields a preliminary lower bound for  $\gr(\av(\pdiamond))$ of 9.40399.
It is rather a surprise that such a simple construction exhibits a growth rate as large as this.

Form this analysis, we see that we have complete freedom in choosing the positions of the blue roots (roots of blue subtrees) relative to the vertices of the red tree. In the light of this, we divide the process of interleaving into two stages:
\begin{bulletnums}
  \item Freely 
        interleave the blue roots with the red vertices.
  \item Select 
        positions for the non-root vertices of each blue subtree, while avoiding the creation of a $\pdiamond$.
\end{bulletnums}
We call the outcome of the first stage 
a \emph{pre-interleaving}.
A pre-interleaving is thus a sequence consisting of $k-1$ red vertices and $d=\ceil{\delta\+\lambda\+k}$ blue vertices (the blue roots);
the non-root vertices of the blue subtrees play no role in a pre-interleaving.

Note that in the second stage,
each blue subtree can be considered independently since no $\pdiamond$ can contain vertices from more than one blue subtree.
We now consider where the non-root vertices may be positioned.

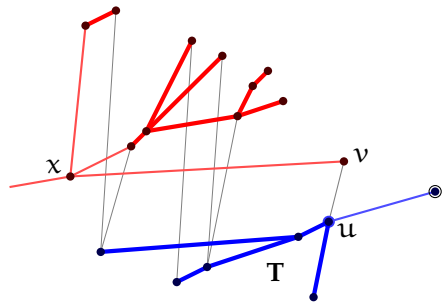
\begin{figure}[ht]
  $$
  \begin{tikzpicture}[scale=0.20,line join=round]
    \draw[gray,very thin] (3,5)--(4,21);
    \draw[gray,very thin] (3,5)--(5,12);
    \draw[gray,very thin] (8,3)--(9,19);
    \draw[gray,very thin] (10,4)--(11,18);
    \draw[gray,very thin] (10,4)--(12,14);
    \draw[gray,thin] (18,7)--(19,11);
    \draw[red!70!white,thick] (1,10)--(-3,9.3);
    \draw[red!70!white,thick] (1,10)--(2,20);
    \draw[red!70!white,thick] (1,10)--(5,12);
    \draw[red!70!white,thick] (1,10)--(19,11);
    \draw[red,ultra thick] (12,14)--(13,16);
    \draw[red,ultra thick] (12,14)--(15,15);
    \draw[red,ultra thick] (13,16)--(14,17);
    \draw[red,ultra thick] (2,20)--(4,21);
    \draw[red,ultra thick] (5,12)--(6,13);
    \draw[red,ultra thick] (6,13)--(9,19);
    \draw[red,ultra thick] (6,13)--(11,18);
    \draw[red,ultra thick] (6,13)--(12,14);
    \fill [blue!80!white,thin] (18,7)  circle [radius=0.4];
    \draw[blue!70!white,, thick] (18,7)--(25,9);
    \draw[blue,ultra thick] (3,5)--(16,6);
    \draw[blue,ultra thick] (8,3)--(10,4);
    \draw[blue,ultra thick] (10,4)--(16,6);
    \draw[blue,ultra thick] (16,6)--(18,7);
    \draw[blue,ultra thick] (17,2)--(18,7);
    \node[right] at (19,11.4) {$v$};
    \node[right] at (17.9,6.4) {$u$};
    \node[left] at (1.1,10.7) {$x$};
    \node at (14.5,3.7) {$\smallT$};
    \plotpermnobox[red!30!black]{}{10,20,0,21,12,13,0,0,19,0,18,14,16,17,15,0,0,0,11}
    \plotpermnobox[blue!30!black]{}{0,0,5,0,0,0,0,3,0,4,0,0,0,0,0,6,2,7,0,0,0,0,0,0,9}    
    \draw [thin] (25,9) circle [radius=0.4];
  \end{tikzpicture}
  $$
  \caption{An interleaving of a blue subtree $\smallT$ with its two-component red forest}
  \label{figRedForest}
\end{figure}
Our first observation is as follows:
Suppose $v$ is the nearest red vertex to the right of the root $u$ of some blue subtree $\smallT$.
Now let $x$ be the parent of $v$ in the red tree.
Then no vertex of~$\smallT$ can be positioned to the left of $x$, since otherwise a $\pdiamond$ would be created in which $xuv$ would be the $\mathbf{324}$.
Thus, vertices of~$\smallT$ can only be interleaved with those red vertices positioned between $u$ and $x$.
We will call the graph induced by this set of red vertices (which may be empty) a \emph{red forest}.
See Figure~\ref{figRedForest} for an illustration.

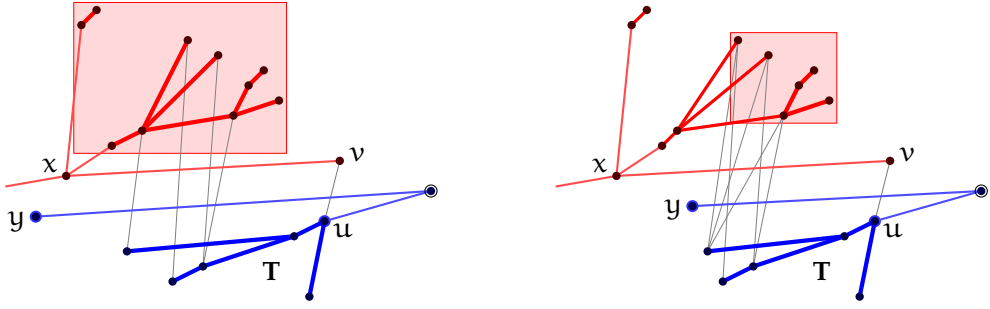
\begin{figure}
  $$
  \begin{tikzpicture}[scale=0.20,line join=round]
    \draw [red,very thin,fill=red!15] (1.5,11.5) rectangle (15.5,21.5);
    \draw[gray,very thin] (5,5)--(6,13);
    \draw[gray,very thin] (8,3)--(9,19);
    \draw[gray,very thin] (10,4)--(11,18);
    \draw[gray,very thin] (10,4)--(12,14);
    \draw[gray,thin] (18,7)--(19,11);
    \draw[red!70!white,thick] (1,10)--(-3,9.3);
    \draw[red!70!white,thick] (1,10)--(2,20);
    \draw[red!70!white,thick] (1,10)--(4,12);
    \draw[red!70!white,thick] (1,10)--(19,11);
    \draw[red,ultra thick] (12,14)--(13,16);
    \draw[red,ultra thick] (12,14)--(15,15);
    \draw[red,ultra thick] (13,16)--(14,17);
    \draw[red,ultra thick] (2,20)--(3,21);
    \draw[red,ultra thick] (4,12)--(6,13);
    \draw[red,ultra thick] (6,13)--(9,19);
    \draw[red,ultra thick] (6,13)--(11,18);
    \draw[red,ultra thick] (6,13)--(12,14);
    \draw[blue!70!white,, thick] (-1,7.3)--(25,9);
    \fill [blue!80!white,thin] (18,7)  circle [radius=0.4];
    \fill [blue!80!white,thin] (-1,7.3)  circle [radius=0.4];
    \draw[blue!70!white,, thick] (18,7)--(25,9);
    \draw[blue,ultra thick] (5,5)--(16,6);
    \draw[blue,ultra thick] (8,3)--(10,4);
    \draw[blue,ultra thick] (10,4)--(16,6);
    \draw[blue,ultra thick] (16,6)--(18,7);
    \draw[blue,ultra thick] (17,2)--(18,7);
    \node[right] at (19,11.4) {$v$};
    \node[right] at (17.9,6.4) {$u$};
    \node[left] at (1.1,10.7) {$x$};
    \node at (14.5,3.7) {$\smallT$};
    \plotpermnobox[red!30!black]{}{10,20,21,12,0,13,0,0,19,0,18,14,16,17,15,0,0,0,11}
    \plotpermnobox[blue!30!black]{}{0,0,0,0,5,0,0,3,0,4,0,0,0,0,0,6,2,7,0,0,0,0,0,0,9}
    \fill[blue!30!black,radius=0.275] (-1,7.3) circle;
    \node[left] at (-1,7.0) {$y$};
    \draw [thin] (25,9) circle [radius=0.4];
  \end{tikzpicture}
  \qquad\qquad
  \begin{tikzpicture}[scale=0.20,line join=round]
    \draw [red,very thin,fill=red!15] (8.5,13.5) rectangle (15.5,19.5);
    \draw[gray,very thin] (7,5)--(9,19);
    \draw[gray,very thin] (7,5)--(11,18);
    \draw[gray,very thin] (7,5)--(12,14);
    \draw[gray,very thin] (8,3)--(9,19);
    \draw[gray,very thin] (10,4)--(11,18);
    \draw[gray,very thin] (10,4)--(12,14);
    \draw[gray,thin] (18,7)--(19,11);
    \draw[red!70!white,thick] (1,10)--(-3,9.3);
    \draw[red!70!white,thick] (1,10)--(2,20);
    \draw[red!70!white,thick] (1,10)--(4,12);
    \draw[red!70!white,thick] (1,10)--(19,11);
    \draw[red,ultra thick] (12,14)--(13,16);
    \draw[red,ultra thick] (12,14)--(15,15);
    \draw[red,ultra thick] (13,16)--(14,17);
    \draw[red,very thick] (2,20)--(3,21);
    \draw[red,very thick] (4,12)--(5,13);
    \draw[red,very thick] (5,13)--(9,19);
    \draw[red,very thick] (5,13)--(11,18);
    \draw[red,very thick] (5,13)--(12,14);
    \draw[blue!70!white,, thick] (6,8)--(25,9);
    \fill [blue!80!white,thin] (18,7)  circle [radius=0.4];
    \fill [blue!80!white,thin] (6,8)  circle [radius=0.4];
    \draw[blue!70!white,, thick] (18,7)--(25,9);
    \draw[blue,ultra thick] (7,5)--(16,6);
    \draw[blue,ultra thick] (8,3)--(10,4);
    \draw[blue,ultra thick] (10,4)--(16,6);
    \draw[blue,ultra thick] (16,6)--(18,7);
    \draw[blue,ultra thick] (17,2)--(18,7);
    \node[right] at (19,11.4) {$v$};
    \node[right] at (17.9,6.4) {$u$};
    \node[left] at (1.1,10.7) {$x$};
    \node at (14.5,3.7) {$\smallT$};
    \plotpermnobox[red!30!black]{}{10,20,21,12,13,0,0,0,19,0,18,14,16,17,15,0,0,0,11}
    \plotpermnobox[blue!30!black]{}{0,0,0,0,0,0,5,3,0,4,0,0,0,0,0,6,2,7,0,0,0,0,0,0,9}
    \plotpermnobox[blue!30!black]{}{0,0,0,0,0,8}
    \node[left] at (6,7.8) {$y$};
    \draw [thin] (25,9) circle [radius=0.4];
  \end{tikzpicture}
  $$
\caption{Two interleavings of a blue subtree $\smallT$ with a red forest; the red fringes consist of the vertices in the shaded regions}
\label{figRedFringes}
\end{figure}
Our second (elementary) observation is 
is as follows:
Suppose $u$ is the root of some blue subtree $\smallT$, and $y$ is the next blue root to the left of $u$.
Then all the non-root vertices of $\smallT$ must occur to the right of $y$ (else $\smallT$ would not be a tree).
Note that $y$ may occur either to the left of $x$ or to its right.
See Figure~\ref{figRedFringes} for illustrations of both of these situations.

These two observations 
provide two independent constraints on the set of red vertices with which
the
non-root vertices of a blue subtree may be interleaved, the first determined by
the structure of the red tree
and the second by
the pre-interleaving.
This set consists of those vertices of the red forest situated to the right of both $x$ and $y$.
These red vertices induce a subgraph of the red forest which we will call its \emph{red fringe}.
In the examples in Figure~\ref{figRedFringes}, the red fringes consist of those vertices in the shaded regions.
The key fact that motivates the rest of our analysis is that vertices of a blue subtree may only be interleaved with vertices of its red fringe.

The size of a red fringe depends on both the size of the corresponding red forest and also on the location of the next blue root to the left.
Let us call the number of
red vertices positioned between a blue root~$u$ and the next blue root to its left ($y$) the \emph{gap size} of $u$; the gap size may be zero.
The number of vertices in the red fringe is thus the smaller of the gap size and the number of vertices in the red forest.

If we combine this fact
with results concerning the limiting distributions of
blue subtrees and
red fringes,
then we can establish a lower bound for $g(\lambda,\delta)$. This is the focus of the next section.

\section{Concentration of distributions}\label{sectConcentration}

To determine our lower bound, we depend critically on the fact that the asymptotic distributions of substructures of permutations in $\WWW_{\lambda,\delta}(k)$ are \emph{concentrated}. In this section we
introduce certain parameters counting these substructures,
show how their concentration enables us to bound $g(\lambda,\delta)$ from below,
and prove three of the four concentration results we require.

It is frequently the case that
distributions of parameters counting the proportion of particular substructures in combinatorial classes
have a convergent mean
and
a variance that
vanishes asymptotically.
As a direct consequence of Chebyshev's inequality, such distributions have the following concentration property (see~\cite{FS2009} Proposition~III.3): 
\begin{prop}\label{propConcentration}
If $\xi_n$ is a sequence of random variables with means $\mu_n=\mathbb{E}[\xi_n]$
and variances $\upsilon_n=\mathbb{V}[\xi_n]$ satisfying the conditions
$$
\liminfty\mu_n=\mu,
\qquad\qquad
\liminfty \upsilon_n \;=\; 0,
$$
for some constant $\mu$,
then
$\xi_n$ \emph{is concentrated at} $\mu$ in the sense that,
for any $\varepsilon>0$,
given sufficiently large $n$,
$$
\mathbb{P}\big[\, |\xi_n-\mu| \:\leqslant\: \varepsilon \,\big] \:\;>\;\: 1-\varepsilon.
$$
\end{prop}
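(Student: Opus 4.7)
The plan is to derive the claim directly from Chebyshev's inequality, using the triangle inequality to replace the floating mean $\mu_n$ with the fixed limit $\mu$.

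First I would decompose
\[
|\xi_n - \mu| \;\leqslant\; |\xi_n - \mu_n| \,+\, |\mu_n - \mu|,
\]
so that the event $\{|\xi_n - \mu| \leqslant \varepsilon\}$ is implied by the event $\{|\xi_n - \mu_n| \leqslant \varepsilon/2\}$ together with the deterministic statement $|\mu_n - \mu| \leqslant \varepsilon/2$. By the hypothesis $\mu_n \to \mu$, the latter holds for all $n \geqslant N_1$, for some threshold $N_1$ depending only on $\varepsilon$.

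For the probabilistic part, I would apply Chebyshev's inequality in the form
\[
\mathbb{P}\big[\, |\xi_n - \mu_n| > \varepsilon/2 \,\big] \;\leqslant\; \frac{4\,\upsilon_n}{\varepsilon^2},
\]
and then invoke $\upsilon_n \to 0$ to pick $N_2$ with $4\,\upsilon_n/\varepsilon^2 < \varepsilon$ for all $n \geqslant N_2$; this gives $\mathbb{P}[|\xi_n - \mu_n| \leqslant \varepsilon/2] > 1 - \varepsilon$. Combining both bounds for $n \geqslant \max(N_1, N_2)$ yields the desired inequality $\mathbb{P}[|\xi_n - \mu| \leqslant \varepsilon] > 1 - \varepsilon$.

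There is essentially no obstacle here: the proposition is a textbook corollary of Chebyshev's inequality, and the only care required is in splitting the tolerance $\varepsilon$ between the deterministic and probabilistic pieces, and in calibrating the variance threshold so that a single constant $\varepsilon$ appears on both sides of the final inequality.
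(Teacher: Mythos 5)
Your proof is correct, and it matches the approach the paper takes: the paper does not spell out a proof but simply states the result is "a direct consequence of Chebyshev's inequality" (citing Flajolet \& Sedgewick, Proposition III.3), which is exactly the argument you give. The splitting of $\varepsilon$ between the deterministic drift $|\mu_n-\mu|$ and the probabilistic Chebyshev bound is the standard way to make this precise.
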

In practice this often means that we can work on the assumption that the value of any such parameter is entirely concentrated at its limiting mean.
This is the case for the
parameters in which we are interested.

We will also make use of the following
result concerning \emph{multiple} concentrated parameters.
\begin{prop}\label{propConcentration2}
If $\xi_n$ and $\xi'_n$ are two sequences of random variables on the same sample space concentrated at $\mu$ and $\mu'$ respectively, then they are \emph{jointly concentrated}
in the sense that,
for any $\varepsilon>0$,
given sufficiently large~$n$,
$$
\mathbb{P}\big[\,
|\xi_n - \mu| \leqslant \varepsilon
\text{~~and~~}
|\xi'_n - \mu'| \leqslant \varepsilon
\,\big] \:\;>\:\; 1-\varepsilon.
$$
\end{prop}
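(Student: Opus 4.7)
The plan is a direct union-bound argument; there is no real obstacle, and the content is essentially the observation that two asymptotically rare events are unlikely to occur simultaneously.

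Given $\varepsilon > 0$, I would invoke the concentration hypothesis for each sequence separately, but with the sharper tolerance $\varepsilon/2$ in place of $\varepsilon$. This yields some $N_1$ such that $\mathbb{P}\big[|\xi_n - \mu| > \varepsilon/2\big] < \varepsilon/2$ for all $n \geqslant N_1$, and similarly some $N_2$ providing the analogous bound for the pair $(\xi'_n,\mu')$. Since the two sequences share a sample space, whenever $n \geqslant \max(N_1, N_2)$ both inequalities refer to probabilities on one common probability space and can therefore be combined.

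The union bound then gives
\[
\mathbb{P}\big[|\xi_n - \mu| > \varepsilon/2 \,\text{ or }\, |\xi'_n - \mu'| > \varepsilon/2\big]
\;<\; \tfrac{\varepsilon}{2} + \tfrac{\varepsilon}{2} \;=\; \varepsilon.
\]
Passing to the complement yields $\mathbb{P}\big[|\xi_n - \mu| \leqslant \varepsilon/2 \text{ and } |\xi'_n - \mu'| \leqslant \varepsilon/2\big] > 1 - \varepsilon$, and relaxing each bound from $\varepsilon/2$ to $\varepsilon$ can only enlarge the joint event, delivering the stated inequality with strict $>$.

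The argument extends verbatim to any finite collection of concentrated sequences on a common sample space by replacing the splitting factor $1/2$ by $1/r$ when $r$ sequences are involved; the only quantitative input is that a positive $\varepsilon$ can always be partitioned into finitely many positive pieces, which is precisely why concentration is preserved under finite joint statements but not, in general, under countable ones.
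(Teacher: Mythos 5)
Your proof is correct and takes essentially the same route as the paper: both arguments apply the concentration hypothesis with the sharper tolerance $\varepsilon/2$ and then use a union bound (or its complement form) to obtain the joint bound. The paper phrases it slightly more tersely by working with $1-\eta$ directly rather than passing through the complementary events, but the content is identical.
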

\begin{proof}
For any $\eta>0$ and sufficiently large $n$,
the probability that $\xi_n$ differs from $\mu$ by less than $\eta$ exceeds $1-\eta$, and similarly for $\xi'$ with $\mu'$.
Hence the probability that both are simultaneously $\eta$-close to their asymptotic means is at least $1-2\+\eta$. Let $\eta=\nfrac{\varepsilon}{2}$.
\end{proof}

We now introduce the parameters we need:

\textbf{Blue subtrees} $\beta_k$:
For each plane tree $\smallT$, let $\beta_k(\smallT)$ be the random variable that records the
proportion of blue subtrees in a blue tree that are isomorphic to $\smallT$.

\textbf{Gap sizes} $\gamma_k$:
For each $j\geqslant0$,
let $\gamma_k(j)$ be the random variable that records the
proportion of blue roots in a pre-interleaving that have
gap size~$j$.
Also, let $\gamma_k(>\! j)$ record the
proportion of blue roots in a pre-interleaving whose gap size exceeds $j$.

\textbf{Red forests} $\rho_k$:
For each plane forest $\smallF$, let $\rho_k(\smallF)$ be the random variable that records the
proportion of positions in a red tree whose red forest is isomorphic to $\smallF$.
Also, let $\rho_k(\smallF^+)$ record the
proportion of positions in a red tree whose red forest has at least $|\smallF|$ vertices,
and for which the graph induced by the rightmost $|\smallF|$ vertices of the forest is isomorphic to $\smallF$.

Below, we prove that each of these parameters is concentrated, and calculate their asymptotic means.
First we describe how the parameters are combined.

Our first combined parameter counts red fringes.
Given
the combination of a red tree and a pre-interleaving of its vertices with a sequence of blue roots,
let $\varphi_k(\smallF)$
be the random variable that records the
proportion of blue roots whose red fringe is isomorphic to $\smallF$.
Now, occurrences of blue roots with a given gap size $j$ are spread almost uniformly across the positions in a red tree, non-uniformity only occurring for the $j$ leftmost positions.
This is also the case for the distribution of occurrences of blue roots whose gap size is at least $j$.
Hence, by the definition of a red fringe at the end of Section~\ref{sectW},
given any $\varepsilon>0$, if $k$ is large enough,
$\varphi_k(\smallF)$ differs from
\begin{equation}\label{eqChi}
\gamma_k(|\smallF|)\+\rho_k(\smallF^+)  \:+\:
\gamma_k(>\!|\smallF|)\+\rho_k(\smallF)
\end{equation}
by less than $\varepsilon$.

Our second combined parameter concerns pairs consisting of a blue subtree and a red fringe.
Given
a red tree, a blue tree and a pre-interleaving of their red vertices and blue roots,
let $\psi_k(\smallT,\smallF)$ be the random variable that records the proportion of blue subtrees that are isomorphic to $\smallT$ and have a red fringe that is isomorphic to~$\smallF$. We will call such a blue subtree a \emph{$(\smallT,\smallF)$-subtree}. Given that occurrences of a given blue subtree are distributed uniformly across the blue roots, we have
\begin{equation}\label{eqZeta}
\psi_k(\smallT,\smallF) \;=\;
\beta_k(\smallT)\+\varphi_k(\smallF).
\end{equation}

Since, as we show below,
$\beta_k$, $\gamma_k$ and $\rho_k$ are concentrated, it follows that $\psi_k$ is also concentrated.
Let $\mu(\smallT,\smallF)$ denote the limiting mean of $\psi_k(\smallT,\smallF)$
as $k$ tends to infinity.

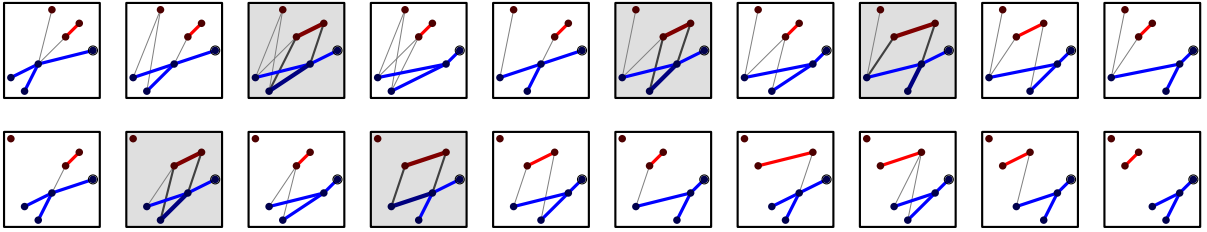
\begin{figure}[ht]
  $$
  \begin{tikzpicture}[scale=0.18,line join=round]
  \draw[gray,very thin] (3,3)--(4,7);
  \draw[gray,very thin] (3,3)--(5,5);
  \draw[red,very thick] (5,5)--(6,6);
  \draw[blue,very thick] (1,2)--(3,3);
  \draw[blue,very thick] (2,1)--(3,3);
  \draw[blue,very thick] (3,3)--(7,4);
  \plotpermnobox[red!30!black]{}{0,0,0,7,5,6}
  \plotpermnobox[blue!30!black]{}{2,1,3,0,0,0,4}
  \plotperm{7}{}
  \draw [thin] (7,4) circle [radius=0.33];
  \end{tikzpicture}
  \;\;\;
  \begin{tikzpicture}[scale=0.18,line join=round]
  \draw[gray,very thin] (1,2)--(3,7);
  \draw[gray,very thin] (2,1)--(3,7);
  \draw[gray,very thin] (4,3)--(5,5);
  \draw[red,very thick] (5,5)--(6,6);
  \draw[blue,very thick] (1,2)--(4,3);
  \draw[blue,very thick] (2,1)--(4,3);
  \draw[blue,very thick] (4,3)--(7,4);
  \plotpermnobox[red!30!black]{}{0,0,7,0,5,6}
  \plotpermnobox[blue!30!black]{}{2,1,0,3,0,0,4}
  \plotperm{7}{}
  \draw [thin] (7,4) circle [radius=0.33];
  \end{tikzpicture}
  \;\;\;
  \begin{tikzpicture}[scale=0.18,line join=round]
  \path [fill=gray!25] (0.5,0.5) rectangle (7.5,7.5);
  \draw[gray,very thin] (4,5)--(1,2)--(3,7);
  \draw[gray,very thin] (2,1)--(3,7);
  \draw[gray!50!black, thick] (4,5)--(2,1);
  \draw[gray!50!black, thick] (5,3)--(6,6);
  \draw[red!50!black,ultra thick] (4,5)--(6,6);
  \draw[blue,very thick] (1,2)--(5,3);
  \draw[blue!50!black,ultra thick] (2,1)--(5,3);
  \draw[blue,very thick] (5,3)--(7,4);
  \plotpermnobox[red!30!black]{}{0,0,7,5,0,6}
  \plotpermnobox[blue!30!black]{}{2,1,0,0,3,0,4}
  \plotperm{7}{}
  \draw [thin] (7,4) circle [radius=0.33];
  \end{tikzpicture}
  \;\;\;
  \begin{tikzpicture}[scale=0.18,line join=round]
  \draw[gray,very thin] (1,2)--(3,7);
  \draw[gray,very thin] (1,2)--(4,5);
  \draw[gray,very thin] (2,1)--(3,7);
  \draw[gray,very thin] (2,1)--(4,5);
  \draw[red,very thick] (4,5)--(5,6);
  \draw[blue,very thick] (1,2)--(6,3);
  \draw[blue,very thick] (2,1)--(6,3);
  \draw[blue,very thick] (6,3)--(7,4);
  \plotpermnobox[red!30!black]{}{0,0,7,5,6}
  \plotpermnobox[blue!30!black]{}{2,1,0,0,0,3,4}
  \plotperm{7}{}
  \draw [thin] (7,4) circle [radius=0.33];
  \end{tikzpicture}
  \;\;\;
  \begin{tikzpicture}[scale=0.18,line join=round]
  \draw[gray,very thin] (1,2)--(2,7);
  \draw[gray,very thin] (4,3)--(5,5);
  \draw[red,very thick] (5,5)--(6,6);
  \draw[blue,very thick] (1,2)--(4,3);
  \draw[blue,very thick] (3,1)--(4,3);
  \draw[blue,very thick] (4,3)--(7,4);
  \plotpermnobox[red!30!black]{}{0,7,0,0,5,6}
  \plotpermnobox[blue!30!black]{}{2,0,1,3,0,0,4}
  \plotperm{7}{}
  \draw [thin] (7,4) circle [radius=0.33];
  \end{tikzpicture}
  \;\;\;
  \begin{tikzpicture}[scale=0.18,line join=round]
  \path [fill=gray!25] (0.5,0.5) rectangle (7.5,7.5);
  \draw[gray,very thin] (4,5)--(1,2)--(2,7);
  \draw[gray!50!black, thick] (4,5)--(3,1);
  \draw[gray!50!black, thick] (5,3)--(6,6);
  \draw[red!50!black,ultra thick] (4,5)--(6,6);
  \draw[blue,very thick] (1,2)--(5,3);
  \draw[blue!50!black,ultra thick] (3,1)--(5,3);
  \draw[blue,very thick] (5,3)--(7,4);
  \plotpermnobox[red!30!black]{}{0,7,0,5,0,6}
  \plotpermnobox[blue!30!black]{}{2,0,1,0,3,0,4}
  \plotperm{7}{}
  \draw [thin] (7,4) circle [radius=0.33];
  \end{tikzpicture}
  \;\;\;
  \begin{tikzpicture}[scale=0.18,line join=round]
  \draw[gray,very thin] (1,2)--(2,7);
  \draw[gray,very thin] (1,2)--(4,5);
  \draw[gray,very thin] (3,1)--(4,5);
  \draw[red,very thick] (4,5)--(5,6);
  \draw[blue,very thick] (1,2)--(6,3);
  \draw[blue,very thick] (3,1)--(6,3);
  \draw[blue,very thick] (6,3)--(7,4);
  \plotpermnobox[red!30!black]{}{0,7,0,5,6}
  \plotpermnobox[blue!30!black]{}{2,0,1,0,0,3,4}
  \plotperm{7}{}
  \draw [thin] (7,4) circle [radius=0.33];
  \end{tikzpicture}
  \;\;\;
  \begin{tikzpicture}[scale=0.18,line join=round]
  \path [fill=gray!25] (0.5,0.5) rectangle (7.5,7.5);
  \draw[gray,very thin] (1,2)--(2,7);
  \draw[gray!50!black, thick] (3,5)--(1,2);
  \draw[gray!50!black, thick] (5,3)--(6,6);
  \draw[red!50!black,ultra thick] (3,5)--(6,6);
  \draw[blue,very thick] (1,2)--(5,3);
  \draw[blue!50!black,ultra thick] (4,1)--(5,3);
  \draw[blue,very thick] (5,3)--(7,4);
  \plotpermnobox[red!30!black]{}{0,7,5,0,0,6}
  \plotpermnobox[blue!30!black]{}{2,0,0,1,3,0,4}
  \plotperm{7}{}
  \draw [thin] (7,4) circle [radius=0.33];
  \end{tikzpicture}
  \;\;\;
  \begin{tikzpicture}[scale=0.18,line join=round]
  \draw[gray,very thin] (1,2)--(2,7);
  \draw[gray,very thin] (1,2)--(3,5);
  \draw[gray,very thin] (4,1)--(5,6);
  \draw[red,very thick] (3,5)--(5,6);
  \draw[blue,very thick] (1,2)--(6,3);
  \draw[blue,very thick] (4,1)--(6,3);
  \draw[blue,very thick] (6,3)--(7,4);
  \plotpermnobox[red!30!black]{}{0,7,5,0,6}
  \plotpermnobox[blue!30!black]{}{2,0,0,1,0,3,4}
  \plotperm{7}{}
  \draw [thin] (7,4) circle [radius=0.33];
  \end{tikzpicture}
  \;\;\;
  \begin{tikzpicture}[scale=0.18,line join=round]
  \draw[gray,very thin] (1,2)--(2,7);
  \draw[gray,very thin] (1,2)--(3,5);
  \draw[red,very thick] (3,5)--(4,6);
  \draw[blue,very thick] (1,2)--(6,3);
  \draw[blue,very thick] (5,1)--(6,3);
  \draw[blue,very thick] (6,3)--(7,4);
  \plotpermnobox[red!30!black]{}{0,7,5,6}
  \plotpermnobox[blue!30!black]{}{2,0,0,0,1,3,4}
  \plotperm{7}{}
  \draw [thin] (7,4) circle [radius=0.33];
  \end{tikzpicture}
  $$
  $$
  \begin{tikzpicture}[scale=0.18,line join=round]
  \draw[gray,very thin] (4,3)--(5,5);
  \draw[red,very thick] (5,5)--(6,6);
  \draw[blue,very thick] (2,2)--(4,3);
  \draw[blue,very thick] (3,1)--(4,3);
  \draw[blue,very thick] (4,3)--(7,4);
  \plotpermnobox[red!30!black]{}{7,0,0,0,5,6}
  \plotpermnobox[blue!30!black]{}{0,2,1,3,0,0,4}
  \plotperm{7}{}
  \draw [thin] (7,4) circle [radius=0.33];
  \end{tikzpicture}
  \;\;\;
  \begin{tikzpicture}[scale=0.18,line join=round]
  \path [fill=gray!25] (0.5,0.5) rectangle (7.5,7.5);
  \draw[gray,very thin] (4,5)--(2,2);
  \draw[gray!50!black, thick] (4,5)--(3,1);
  \draw[gray!50!black, thick] (5,3)--(6,6);
  \draw[red!50!black,ultra thick] (4,5)--(6,6);
  \draw[blue,very thick] (2,2)--(5,3);
  \draw[blue!50!black,ultra thick] (3,1)--(5,3);
  \draw[blue,very thick] (5,3)--(7,4);
  \plotpermnobox[red!30!black]{}{7,0,0,5,0,6}
  \plotpermnobox[blue!30!black]{}{0,2,1,0,3,0,4}
  \plotperm{7}{}
  \draw [thin] (7,4) circle [radius=0.33];
  \end{tikzpicture}
  \;\;\;
  \begin{tikzpicture}[scale=0.18,line join=round]
  \draw[gray,very thin] (2,2)--(4,5);
  \draw[gray,very thin] (3,1)--(4,5);
  \draw[red,very thick] (4,5)--(5,6);
  \draw[blue,very thick] (2,2)--(6,3);
  \draw[blue,very thick] (3,1)--(6,3);
  \draw[blue,very thick] (6,3)--(7,4);
  \plotpermnobox[red!30!black]{}{7,0,0,5,6}
  \plotpermnobox[blue!30!black]{}{0,2,1,0,0,3,4}
  \plotperm{7}{}
  \draw [thin] (7,4) circle [radius=0.33];
  \end{tikzpicture}
  \;\;\;
  \begin{tikzpicture}[scale=0.18,line join=round]
  \path [fill=gray!25] (0.5,0.5) rectangle (7.5,7.5);
  \draw[gray!50!black, thick] (2,2)--(3,5);
  \draw[gray!50!black, thick] (5,3)--(6,6);
  \draw[red!50!black,ultra thick] (3,5)--(6,6);
  \draw[blue!50!black,ultra thick] (2,2)--(5,3);
  \draw[blue,very thick] (4,1)--(5,3);
  \draw[blue,very thick] (5,3)--(7,4);
  \plotpermnobox[red!30!black]{}{7,0,5,0,0,6}
  \plotpermnobox[blue!30!black]{}{0,2,0,1,3,0,4}
  \plotperm{7}{}
  \draw [thin] (7,4) circle [radius=0.33];
  \end{tikzpicture}
  \;\;\;
  \begin{tikzpicture}[scale=0.18,line join=round]
  \draw[gray,very thin] (2,2)--(3,5);
  \draw[gray,very thin] (4,1)--(5,6);
  \draw[red,very thick] (3,5)--(5,6);
  \draw[blue,very thick] (2,2)--(6,3);
  \draw[blue,very thick] (4,1)--(6,3);
  \draw[blue,very thick] (6,3)--(7,4);
  \plotpermnobox[red!30!black]{}{7,0,5,0,6}
  \plotpermnobox[blue!30!black]{}{0,2,0,1,0,3,4}
  \plotperm{7}{}
  \draw [thin] (7,4) circle [radius=0.33];
  \end{tikzpicture}
  \;\;\;
  \begin{tikzpicture}[scale=0.18,line join=round]
  \draw[gray,very thin] (2,2)--(3,5);
  \draw[red,very thick] (3,5)--(4,6);
  \draw[blue,very thick] (2,2)--(6,3);
  \draw[blue,very thick] (5,1)--(6,3);
  \draw[blue,very thick] (6,3)--(7,4);
  \plotpermnobox[red!30!black]{}{7,0,5,6}
  \plotpermnobox[blue!30!black]{}{0,2,0,0,1,3,4}
  \plotperm{7}{}
  \draw [thin] (7,4) circle [radius=0.33];
  \end{tikzpicture}
  \;\;\;
  \begin{tikzpicture}[scale=0.18,line join=round]
  \draw[gray,very thin] (5,3)--(6,6);
  \draw[red,very thick] (2,5)--(6,6);
  \draw[blue,very thick] (3,2)--(5,3);
  \draw[blue,very thick] (4,1)--(5,3);
  \draw[blue,very thick] (5,3)--(7,4);
  \plotpermnobox[red!30!black]{}{7,5,0,0,0,6}
  \plotpermnobox[blue!30!black]{}{0,0,2,1,3,0,4}
  \plotperm{7}{}
  \draw [thin] (7,4) circle [radius=0.33];
  \end{tikzpicture}
  \;\;\;
  \begin{tikzpicture}[scale=0.18,line join=round]
  \draw[gray,very thin] (3,2)--(5,6);
  \draw[gray,very thin] (4,1)--(5,6);
  \draw[red,very thick] (2,5)--(5,6);
  \draw[blue,very thick] (3,2)--(6,3);
  \draw[blue,very thick] (4,1)--(6,3);
  \draw[blue,very thick] (6,3)--(7,4);
  \plotpermnobox[red!30!black]{}{7,5,0,0,6}
  \plotpermnobox[blue!30!black]{}{0,0,2,1,0,3,4}
  \plotperm{7}{}
  \draw [thin] (7,4) circle [radius=0.33];
  \end{tikzpicture}
  \;\;\;
  \begin{tikzpicture}[scale=0.18,line join=round]
  \draw[gray,very thin] (3,2)--(4,6);
  \draw[red,very thick] (2,5)--(4,6);
  \draw[blue,very thick] (3,2)--(6,3);
  \draw[blue,very thick] (5,1)--(6,3);
  \draw[blue,very thick] (6,3)--(7,4);
  \plotpermnobox[red!30!black]{}{7,5,0,6}
  \plotpermnobox[blue!30!black]{}{0,0,2,0,1,3,4}
  \plotperm{7}{}
  \draw [thin] (7,4) circle [radius=0.33];
  \end{tikzpicture}
  \;\;\;
  \begin{tikzpicture}[scale=0.18,line join=round]
  \draw[red,very thick] (2,5)--(3,6);
  \draw[blue,very thick] (4,2)--(6,3);
  \draw[blue,very thick] (5,1)--(6,3);
  \draw[blue,very thick] (6,3)--(7,4);
  \plotpermnobox[red!30!black]{}{7,5,6}
  \plotpermnobox[blue!30!black]{}{0,0,0,2,1,3,4}
  \plotperm{7}{}
  \draw [thin] (7,4) circle [radius=0.33];
  \end{tikzpicture}
  $$
  \caption{
  $Q(\mathbf{2134},\mathbf{312})=15$; the five shaded interleavings contain a $\pdiamond$
  }
  \label{figTauPiInterleaving}
\end{figure}
Finally, given a blue subtree~$\smallT$ and a red fringe $\smallF$, let
$Q(\smallT,\smallF)$ denote the number of distinct ways of interleaving the non-root vertices of $\smallT$ and the vertices of $\smallF$ without creating a $\pdiamond$. 
See Figure~\ref{figTauPiInterleaving} for an example.

With all the relevant parameters defined, we are now in a position to present a lower bound on the value of $g(\lambda,\delta)$.
\begin{prop}\label{propLB}
Let $\SSS$ be any finite set of pairs $(\smallT,\smallF)$ composed of a plane tree $\smallT$ and a plane forest $\smallF$. Then
$$
g(\lambda,\delta)
\:\;\geqslant\;\:
E(\lambda,\delta)
^{\nfrac{1}{(1+\lambda)}}
\,\times\!
\prod_{(\subT,\subF)\,\in\,\SSS\,}\!\!\!
Q(\smallT,\smallF)^{2\delta\lambda\mu(\subT,\subF)
/(1+\lambda)
},
$$
where $E(\lambda,\delta)$ is as defined in~\eqref{eqEDef} on page~\pageref{eqEDef}.
\end{prop}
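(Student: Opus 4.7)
The plan is to lower-bound $|\WWW_{\lambda,\delta}(k)|$ by refining the count used for $\WWW^0_{\lambda,\delta}(k)$: for each blue subtree whose type $(\smallT,\smallF)$ lies in $\SSS$, I allow any of its $Q(\smallT,\smallF)$ valid placements rather than just the single ``non-split'' placement used in $\WWW^0$. Taking $n(k,\lambda)$-th roots and passing to the limit then yields the claimed inequality.

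Concretely, every element of $\WWW_{\lambda,\delta}(k)$ is determined by its $k+1$ red trees, its $k$ blue trees, the $2k$ pre-interleavings, and---independently for each blue subtree---the placement of its non-root vertices within its red fringe. For a blue subtree of type $(\smallT,\smallF)$ there are exactly $Q(\smallT,\smallF)$ such placements. Restricting every subtree whose type is not in $\SSS$ to the simple placement (as in $\WWW^0$) and allowing all valid placements for those whose type lies in $\SSS$, I obtain
$$
|\WWW_{\lambda,\delta}(k)| \;\geqslant\; \sum \,\prod_{(\subT,\subF)\in\SSS} Q(\smallT,\smallF)^{N_{\subT,\subF}},
$$
where the outer sum ranges over the $|\WWW^0_{\lambda,\delta}(k)| = R_k^{k+1}B_k^k P_k^{2k}$ tuples of trees and pre-interleavings, and $N_{\subT,\subF}$ counts the $(\smallT,\smallF)$-subtrees in the tuple.

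The next step is concentration. Each $\psi_k(\smallT,\smallF)$ is concentrated at $\mu(\smallT,\smallF)$ by~\eqref{eqZeta} together with the concentration of $\beta_k$, $\gamma_k$ and $\rho_k$. The $2k$ interleavings in a random tuple are independent, so the overall proportion $N_{\subT,\subF}/(2kd)$ has the same limiting mean and a variance bounded by $1/(2k)$ times that of $\psi_k$; it is thus itself concentrated at $\mu(\smallT,\smallF)$ by Proposition~\ref{propConcentration}. Since $\SSS$ is finite, an iterated application of Proposition~\ref{propConcentration2} shows that for every $\varepsilon > 0$ and all sufficiently large $k$, at least a $(1-\varepsilon)$-fraction of tuples simultaneously satisfy $N_{\subT,\subF} \geqslant 2kd\,(\mu(\smallT,\smallF) - \varepsilon)$ for every $(\smallT,\smallF) \in \SSS$. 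Restricting the sum above to these good tuples yields
$$
|\WWW_{\lambda,\delta}(k)| \;\geqslant\; (1-\varepsilon)\,|\WWW^0_{\lambda,\delta}(k)| \,\prod_{(\subT,\subF)\in\SSS} Q(\smallT,\smallF)^{2kd\,(\mu(\subT,\subF)-\varepsilon)}.
$$

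To conclude, I take $n(k,\lambda)$-th roots: since $n(k,\lambda) \sim (1+\lambda)k^2$ and $2kd \sim 2\delta\lambda\,k^2$, the ratio $2kd/n(k,\lambda)$ tends to $2\delta\lambda/(1+\lambda)$, and~\eqref{eqW0GR} gives $|\WWW^0_{\lambda,\delta}(k)|^{1/n(k,\lambda)} \to E(\lambda,\delta)^{1/(1+\lambda)}$. Letting $k \to \infty$ and then $\varepsilon \to 0$ delivers the stated inequality. The proof of the proposition itself is in this sense largely bookkeeping; the substantive work lies elsewhere, namely in establishing the concentration of $\beta_k$, $\gamma_k$ and---most delicately---$\rho_k$, the last being the reason for the excursion into Theorem~\ref{thmLukaPatternsGaussian} on patterns in {\L}ukasiewicz paths.
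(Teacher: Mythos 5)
Your proof is correct and takes essentially the same approach as the paper: lower-bound $|\WWW_{\lambda,\delta}(k)|$ by the product of $Q(\smallT,\smallF)$-factors arising from $(\smallT,\smallF)$-subtrees, use joint concentration of $\psi_k$ to control the exponents, and pass to the limit via~\eqref{eqW0GR}. One small correction: the $2k$ interleavings in a random tuple are not mutually independent (adjacent interleavings share a red or blue tree), so the claimed variance bound of $\mathbb{V}[\psi_k]/(2k)$ for $N_{\subT,\subF}/(2kd)$ does not follow as stated; nevertheless, since $\psi_k$ for each individual interleaving has variance tending to zero, a Cauchy--Schwarz bound shows the variance of the average is at most $\mathbb{V}[\psi_k]\to0$, so concentration and the rest of the argument are unaffected.
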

\begin{proof}
Consider
a red tree and a blue tree together with a pre-interleaving of their red vertices and blue roots.
By Propositions~\ref{propConcentration} and~\ref{propConcentration2},
for any $\varepsilon>0$, if $k$ is large enough, then
with probability exceeding $1-\varepsilon$,
it is the case that
$\big|\psi_k(\smallT,\smallF) - \mu(\smallT,\smallF)\big| \leqslant \varepsilon$
for every $(\smallT,\smallF)\in \SSS$.

So the proportion of pre-interleaved pairs of trees
with at least
$\ceil{\delta\+\lambda\+k}\!(\mu(\smallT,\smallF)-\varepsilon)$
occurrences of
$(\smallT,\smallF)$-subtrees
for every $(\smallT,\smallF)\in \SSS$
exceeds $1-\varepsilon$.

Elements of $\WWW_{\lambda,\delta}(k)$
are constructed by independently choosing
trees and interleavings.
Thus, the size of $\WWW_{\lambda,\delta}(k)$ is bounded below by
$$
\big|\WWW_{\lambda,\delta}(k)\big|
\:\;\geqslant\;\:
\big| \WWW^0_{\lambda,\delta}(k) \big| \,\times\,
\Bigg(\!
\prod_{(\subT,\subF)\,\in\,\SSS\,}\!\!\!
(1-\varepsilon)\+Q(\smallT,\smallF)^{\ceil{\delta\lambda k}(\mu(\subT,\subF)-\varepsilon)}
\Bigg) ^{\!2k}
.$$

Recall that
$$
g(\lambda,\delta)
\;=\;
\liminfty[k]\big|\WWW_{\lambda,\delta}(k)\big|^{\nfrac{1}{n(k,\lambda)}},
$$
where
$n(k,\lambda) = k\+\big(k+\ceil{\lambda\+k}+1\big)$ is the length of each permutation in $\WWW_{\lambda,\delta}(k)$.
The desired result follows after expanding and taking the limit, making use of~\eqref{eqW0GR}.
\end{proof}

To determine the asymptotic mean and variance of our parameters, we utilise bivariate generating functions.
The following standard result enables us to obtain the required moments directly
as long as we can extract coefficients.
We use $[z^n]\+f(z)$ to denote the coefficient of $z^n$ in the series expansion of $f(z)$; we also use $f_x$ for $\frac{\partial f}{\partial x}$ and $f_{xx}$ for $\frac{\partial^2 f}{\partial x^2}$.
\begin{prop}[\cite{FS2009} Proposition III.2]\label{propMoments}
Suppose $A(z,x)$ is the bivariate generating function for some combinatorial class, in which $z$ marks size and $x$ marks the value of a parameter $\xi$.
Then the mean
and
variance
of $\xi$ for elements of size $n$ are given by
$$
\mathbb{E}_n[\xi] \;=\; \frac{[z^n]\+ A_x(z,1)}{[z^n]\+ A(z,1)}
\qquad
\text{and}
\qquad
\mathbb{V}_n[\xi]
\;=\;
\frac{[z^n]\+ A_{xx}(z,1) 
}{[z^n]\+ A(z,1)}
\:+\: \mathbb{E}_n(\xi)
\:-\: {\mathbb{E}_n(\xi)}^2
$$
respectively. 
\end{prop}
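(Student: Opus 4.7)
The plan is to work directly from the power series definition of $A(z,x)$. I would write
$$
A(z,x) \;=\; \sum_{n,k\geqslant 0} a_{n,k}\, z^n x^k,
$$
where $a_{n,k}$ counts the elements of size $n$ on which the parameter $\xi$ takes value $k$. Setting $A_n = [z^n]\+A(z,1) = \sum_k a_{n,k}$ gives the total count of size-$n$ elements, so the distribution of $\xi$ conditioned on size $n$ is $\mathbb{P}_n[\xi = k] = a_{n,k}/A_n$. Everything else is then a matter of recognising which sums of $a_{n,k}$ appear after differentiating in $x$ and setting $x=1$.

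For the mean, I would differentiate once: $A_x(z,x) = \sum_{n,k} k\+a_{n,k}\+z^n x^{k-1}$, so that $[z^n]\+A_x(z,1) = \sum_k k\+a_{n,k}$. Dividing by $A_n = [z^n]\+A(z,1)$ yields $\mathbb{E}_n[\xi]$ by definition, which is the first formula. For the variance, the natural object produced by the second derivative is a \emph{factorial} moment: $A_{xx}(z,1) = \sum_{n,k} k(k-1)\+a_{n,k}\+z^n$, hence
$$
\frac{[z^n]\+A_{xx}(z,1)}{[z^n]\+A(z,1)} \;=\; \mathbb{E}_n[\xi(\xi-1)] \;=\; \mathbb{E}_n[\xi^2] - \mathbb{E}_n[\xi].
$$
Rearranging to solve for $\mathbb{E}_n[\xi^2]$ and substituting into $\mathbb{V}_n[\xi] = \mathbb{E}_n[\xi^2] - (\mathbb{E}_n[\xi])^2$ immediately gives the stated expression, with the extra $+\,\mathbb{E}_n(\xi)$ accounting for the gap between the factorial moment and the ordinary second moment.

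Since each step is a formal manipulation of power series, there is no genuine obstacle; the only point worth flagging is the conversion between factorial and ordinary moments, which is the source of the extra $+\,\mathbb{E}_n(\xi)$ in the variance formula. A minor technical remark would be that all the term-by-term differentiations are valid at the level of formal power series, so no convergence hypothesis on $A(z,x)$ is required.
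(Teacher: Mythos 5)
Your derivation is correct and is the standard proof of this fact: expand $A(z,x)$ as a double power series, observe that $[z^n]A_{xx}(z,1)/[z^n]A(z,1)$ is the second \emph{factorial} moment $\mathbb{E}_n[\xi(\xi-1)]$, and convert to the variance via $\mathbb{V}_n[\xi]=\mathbb{E}_n[\xi(\xi-1)]+\mathbb{E}_n[\xi]-\mathbb{E}_n[\xi]^2$. The paper itself does not prove this proposition but simply cites it from Flajolet and Sedgewick, and your argument is precisely the one given there.
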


The proofs of our first three concentration results each follow a similar pattern:
establish the generating function; extract the coefficients; apply Proposition~\ref{propMoments}; take limits
using Stirling's approximation; finally apply Proposition~\ref{propConcentration}.

First, we consider blue subtrees.
Recall that the random variable $\beta_k(\smallT)$ records
the proportion of principal subtrees in a
$\ceil{\lambda\+k}$-vertex plane tree with root degree $\ceil{\delta\+\lambda\+k}$
that are isomorphic to~$\smallT$.
\begin{prop}\label{propPsi}
Let $i=|\smallT|$.
$\beta_k(\smallT)$ is concentrated at
$$
\mu_\beta(\smallT) \;=\; \frac{(1-\delta)^{i-1}}{(2-\delta)^{2i-1}}.
$$
\end{prop}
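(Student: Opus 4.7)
The plan is to follow the standard five-step recipe flagged by the author: set up a bivariate generating function, extract coefficients via Lagrange inversion, apply Proposition~\ref{propMoments}, pass to the limit using Stirling, and invoke Proposition~\ref{propConcentration}. Let $T(z) = \tfrac12(1-\sqrt{1-4z})$ denote the generating function for plane trees by number of vertices, which satisfies $z = T(1-T)$. Since any fixed plane tree on $i$ vertices contributes the single monomial $z^i$ to $T(z)$, the bivariate generating function for plane trees with root degree exactly $d$, in which $x$ marks occurrences of $\smallT$ as a principal subtree, is
$$
A(z,x) \;=\; z\+\bigl(T(z) - z^i + x\+z^i\bigr)^{d},
$$
with the leading $z$ accounting for the root.

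Differentiating in $x$, setting $x=1$, applying Proposition~\ref{propMoments}, and dividing by $d$ (since $\beta_k$ is a proportion rather than a count), I obtain
$$
\mathbb{E}\bigl[\beta_k(\smallT)\bigr] \;=\; \frac{[z^{\ell-i-1}]\+T(z)^{d-1}}{[z^{\ell-1}]\+T(z)^d},
$$
with $\ell = \ceil{\lambda\+k}$ and $d = \ceil{\delta\+\lambda\+k}$. Lagrange inversion applied to $z = T(1-T)$ gives $[z^n]\+T(z)^m = \tfrac{m}{n}\binom{2n-m-1}{n-m}$, so this is a ratio of two binomial coefficients times a polynomial prefactor that tends to~$1$. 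The standard estimate
$$
\frac{\binom{N-a}{M-b}}{\binom{N}{M}} \;\sim\; \frac{M^b\+(N-M)^{a-b}}{N^a},
$$
applied with $N \sim (2-\delta)\+\lambda\+k$, $M \sim (1-\delta)\+\lambda\+k$, $a=2i-1$ and $b=i-1$ (the shifts induced when $\ell-1\mapsto\ell-i-1$ and $d\mapsto d-1$) then delivers exactly $\mu_\beta(\smallT) = (1-\delta)^{i-1}/(2-\delta)^{2i-1}$.

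For concentration, I would show the variance of $\beta_k(\smallT)$ tends to zero. The factorial moment of the count $\xi = d\+\beta_k(\smallT)$ is
$$
\mathbb{E}\bigl[\xi(\xi-1)\bigr] \;=\; \frac{d(d-1)\+[z^{\ell-2i-1}]\+T(z)^{d-2}}{[z^{\ell-1}]\+T(z)^d},
$$
and the same binomial asymptotic (now with $a=4i-2$, $b=2i-2$) gives $d^2\+\mu_\beta(\smallT)^2\,(1+O(1/k))$. Hence $\mathbb{V}[\xi] = \mathbb{E}[\xi(\xi-1)] + \mathbb{E}[\xi] - \mathbb{E}[\xi]^2 = O(d)$, so $\mathbb{V}[\beta_k(\smallT)] = O(1/d) = O(1/k) \to 0$, and Proposition~\ref{propConcentration} closes the argument. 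The main obstacle is precisely this variance step: the leading $d^2\+\mu_\beta^2$ pieces of $\mathbb{E}[\xi(\xi-1)]$ and $\mathbb{E}[\xi]^2$ cancel, so one must retain the $O(1/k)$ corrections in the Stirling expansion and check that they contribute only $O(d^2/k) = O(d)$ — matching the linear $\mathbb{E}[\xi]$ term — rather than swamping it. Heuristically $\xi$ is close to $\mathrm{Binomial}(d,\mu_\beta)$ because principal subtrees behave as though asymptotically independent; rendering this explicit through the generating-function bookkeeping is the only real work.
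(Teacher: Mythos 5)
Your proof is correct and follows essentially the same route as the paper: same bivariate generating function $z\bigl(T(z)+(x-1)z^i\bigr)^d$, coefficient extraction (the paper states the binomial formulas directly where you invoke Lagrange inversion for $[z^n]T(z)^m$), Proposition~\ref{propMoments}, and Proposition~\ref{propConcentration}. The only difference is expositional — you make the binomial-ratio asymptotic and the cancellation in the variance explicit, whereas the paper simply records the three coefficient formulas and declares $k\,\mathbb{V}[\beta_k(\smallT)]\to\upsilon_\beta(\smallT)$; both are sound.
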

\begin{proof}
Let
$
T(z) = \thalf \+ \big(1-\sqrt{1-4\+z}\big)
$
be the generating function for plane trees.
Then
the bivariate generating function for plane trees with root degree $d$, in which $z$ marks vertices and $u$ marks principal subtrees isomorphic to $\smallT$, is given by
$$
B(z,u) \;=\; z \left(T(z)+(u-1)\+z^i\right)^{\!d}.
$$
Extracting coefficients yields
$$
\begin{array}{lcl}
[z^\ell]\+ B(z,1) & = & \frac{d}{\ell-1} \+ \binom{2\+\ell - d - 3}{\ell-2}, \\[6pt]
[z^\ell]\+ B_u(z,1)
& = & \frac{d\+(d-1)}{\ell-i-1}\+\binom{2\+\ell-2\+i - d - 2}{\ell-i-2}, \\[6pt]
[z^\ell]\+ B_{uu}(z,1)
& = & \frac{d\+(d-1)\+(d-2)}{\ell-2\+i-1}\+\binom{2\+\ell-4\+i - d - 1}{\ell-2\+i-2}.
\end{array}
$$
Hence, with $\ell=\ceil{\lambda\+k}$ and $d=\ceil{\delta\+\lambda\+k}$, applying Proposition~\ref{propMoments}
and taking limits gives
$$\liminfty[k]\mathbb{E}[\beta_k(\smallT)] \;=\; \frac{(1-\delta)^{i-1}}{(2-\delta)^{2i-1}}
\qquad\text{and}\qquad
\liminfty[k] k \+ \mathbb{V}[\beta_k(\smallT)] \;=\; \upsilon_\beta(\smallT),$$
where $\upsilon_\beta(\smallT)$ is some rational function in $\delta$.
So,
$\beta_k(\smallT)$ satisfies the conditions for Proposition~\ref{propConcentration} and is thus concentrated at $\mu_\beta(\smallT)$ as required.
\end{proof}
Secondly, we consider gap size.
Recall that,
given a pre-interleaving of the non-root vertices of a $k$-vertex red tree and
$\ceil{\delta\+\lambda\+k}$ blue roots,
the random variable
$\gamma_k(j)$ records the
proportion of blue roots that have
gap size~$j$. Similarly, $\gamma_k(>\! j)$ records the proportion that have gap size exceeding $j$.
\begin{prop}\label{propPhi}
$\gamma_k(j)$ is concentrated at
$$\mu_\gamma(j) \;=\;
\frac{\delta\+\lambda}{(1+\delta\+\lambda)^{j+1}}
.
$$
Also,
$\gamma_k(>\! j)$ is concentrated at
$$\mu_\gamma(>\!j) \;=\;
\frac{1}{(1+\delta\+\lambda)^{j+1}}
.
$$
\end{prop}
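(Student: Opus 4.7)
The plan is to mirror the three-step pattern established in the proof of Proposition~\ref{propPsi}: write down a bivariate generating function for pre-interleavings, extract coefficients, apply Proposition~\ref{propMoments} to obtain the mean and variance, and invoke Proposition~\ref{propConcentration}. A pre-interleaving decomposes uniquely as a word $R^{a_0}BR^{a_1}B\cdots BR^{a_d}$ with $a_i\geqslant0$ summing to $k-1$, where the gap size of the $i$-th blue root for $i\geqslant2$ is $a_{i-1}$. (The leftmost blue root has no predecessor; handling it separately perturbs the count by at most one, hence shifts the proportion by $O(1/k)$, which is absorbed in the limit.) Letting $z$ mark red vertices and $x$ mark blue roots of gap size $j$, and writing $G(z)=1/(1-z)$, the natural generating function is
\[
A(z,x) \;=\; \bigl(G(z) + (x-1)\+z^j\bigr)^{\!d}\+ G(z),
\]
since each of the $d$ leading gaps contributes either an arbitrary run of reds (weight $G(z)$) or one of length exactly $j$ (extra weight $(x-1)z^j$), while the trailing gap contributes only $G(z)$.

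Differentiating gives $A_x(z,1) = d\+z^j\+G(z)^d$ and $A_{xx}(z,1) = d(d-1)\+z^{2j}\+G(z)^{d-1}$, and the coefficients $[z^{k-1}]$ of $A$, $A_x$ and $A_{xx}$ evaluate in closed form as binomial coefficients. Dividing by $d$ to convert count into proportion and simplifying, Proposition~\ref{propMoments} yields
\[
\mathbb{E}[\gamma_k(j)] \;=\; \frac{d\,(k-1)(k-2)\cdots(k-j)}{(k+d-1)(k+d-2)\cdots(k+d-j-1)}.
\]
With $d=\ceil{\delta\+\lambda\+k}$, the numerator is asymptotic to $\delta\lambda\+k^{j+1}$ and the denominator to $(1+\delta\lambda)^{j+1}k^{j+1}$, so the ratio tends to $\mu_\gamma(j)$. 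A parallel computation shows $k\cdot\mathbb{V}[\gamma_k(j)]$ tends to a finite rational function of $\delta$ and $\lambda$, whence $\mathbb{V}[\gamma_k(j)]=O(1/k)\to 0$ and Proposition~\ref{propConcentration} delivers concentration at $\mu_\gamma(j)$.

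For $\gamma_k(>\!j)$ I would repeat the argument with the marker $z^j$ replaced by $\sum_{a>j}z^a = z^{j+1}/(1-z)$; the same machinery yields mean tending to $(1+\delta\lambda)^{-(j+1)} = \mu_\gamma(>\!j)$ with vanishing variance. Alternatively, this value is forced by the telescoping identity $\sum_{i=0}^{j}\mu_\gamma(i) = 1-(1+\delta\lambda)^{-(j+1)}$, and joint concentration via Proposition~\ref{propConcentration2} then gives the result. There is no deep conceptual obstacle here: the work is genuinely routine, and the only care required is bookkeeping of binomial-coefficient ratios, together with the verification that the $O(1)$ corrections from the ceiling $\ceil{\delta\lambda k}$ and from the exceptional first gap are of lower order in $k$ and therefore vanish in the limit.
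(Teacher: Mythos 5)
Your proposal is correct and takes essentially the same route as the paper: the generating function you write down, $A(z,x)=\bigl(G(z)+(x-1)z^j\bigr)^{d}G(z)$ with $[z^{k-1}]$ extraction, is the paper's $G(z,v)=\frac{z}{1-z}\bigl(\frac{1}{1-z}+(v-1)z^j\bigr)^{d}$ with $[z^k]$ extraction — the prefactor $z$ simply shifts the coefficient index by one, and all three extracted coefficients agree. For $\gamma_k(>\!j)$, your second alternative (telescoping $\sum_{i=0}^j\mu_\gamma(i)$ together with joint concentration via Proposition~\ref{propConcentration2}) is precisely what the paper does.
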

\begin{proof}
The bivariate generating function for pre-interleavings containing $d$ blue roots, in which $z$ marks red vertices and $v$ marks gaps of size $j$, is given by
$$
G(z,v) \;=\; \tfrac{z}{1-z} \left(\tfrac{1}{1-z}+(v-1)\+z^j\right)^{\!d}.
$$
Extracting coefficients yields
$$
\begin{array}{lcl}
[z^k]\+ G(z,1) & = & \binom{k+d-1}{d}, \\[6pt]
[z^k]\+ G_v(z,1)
& = & d\+\binom{k-j+d-2}{d-1}, \\[6pt]
[z^k]\+ G_{vv}(z,1)
& = & d\+(d-1)\+\binom{k-2\+j+d-3}{d-2}.
\end{array}
$$
Hence, with $d=\ceil{\delta\+\lambda\+k}$, applying Proposition~\ref{propMoments}
and taking limits gives
$$\liminfty[k]\mathbb{E}[\gamma_k(j)] \;=\; \frac{\delta\+\lambda}{(1+\delta\+\lambda)^{j+1}}
\qquad\text{and}\qquad
\liminfty[k] k \+ \mathbb{V}[\gamma_k(j)] \;=\; \upsilon_\gamma(j),$$
where $\upsilon_\gamma(j)$ is some rational function in $\delta$ and $\lambda$.
So,
$\gamma_k(j)$ satisfies the conditions for Proposition~\ref{propConcentration} and is thus concentrated at $\mu_\gamma(j)$ as required.

Also, since
$$
\liminfty[k]\mathbb{E}[\gamma_k(>\!j)] \;=\; 1\:-\:\sum\limits_{i=0}^j\mu_\gamma(i) \;=\; \frac{1}{(1+\delta\+\lambda)^{j+1}}
,
$$ $\gamma_k(>\!j)$ is concentrated at $\mu_\gamma(>\!j)$ as required.
\end{proof}
Thirdly, we consider red forests.
Recall that
the random variable
$\rho_k(\smallF)$ records the
proportion of positions in a $k$-vertex red tree whose red forest is isomorphic to $\smallF$.
\begin{prop}\label{propOmega}
Let $m=|\smallF|$.
$\rho_k(\smallF)$ is concentrated at
$$
\mu_\rho(\smallF)
\;=\;
\frac{1}{2^{2m+1}}.
$$
\end{prop}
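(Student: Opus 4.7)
The plan is to follow the pattern of Propositions~\ref{propPsi} and~\ref{propPhi}: set up a suitable generating function, extract coefficients, apply Proposition~\ref{propMoments}, and conclude by Proposition~\ref{propConcentration}. Let $s$ denote the number of components of $\smallF$.

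A position in a plane tree $T$ has red forest isomorphic to $\smallF$ exactly when it sits at the root of the $(s+1)$-th child of some vertex $x\in T$ whose first $s$ children's subtrees in sibling order are the successive components of $\smallF$ (so that $x$ has at least $s+1$ children). Pointing at such an $x$ and decomposing $T$ along the path from its root to $x$: each vertex on the way to $x$ contributes $z/(1-T(z))^2$ (for the vertex together with its left- and right-sibling sequences), while $x$ itself contributes $z\+z^m\+T(z)/(1-T(z))$ (for $x$, its $s$ prescribed subtrees of total size $m$, the mandatory $(s+1)$-th subtree, and any further children). Summing over path length and using the identity $(1-T)^2-z=(1-T)(1-2T)$ produces
\begin{equation*}
V_\smallF(z) \;=\; \frac{z^{m+1}\+T(z)}{1-2T(z)}.
\end{equation*}

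Since $1-2T(z)=\sqrt{1-4z}$, the expansion $1/\sqrt{1-4z}=\sum_n\binom{2n}{n}z^n$ yields $[z^k]V_\smallF(z)=\tfrac12\binom{2(k-m-1)}{k-m-1}$ for $k>m+1$. Dividing by the total number of positions, asymptotic to $k\+T_k=[z^k](zT'(z))=\binom{2k-2}{k-1}$, and applying Stirling's approximation gives
\begin{equation*}
\liminfty[k]\mathbb{E}[\rho_k(\smallF)] \;=\; \frac{1}{2\cdot 4^{\+m}} \;=\; \frac{1}{2^{2m+1}}.
\end{equation*}
For the variance, the analogous doubly-pointed generating function enumerating triples $(T,x,x')$ with $x\neq x'$ both having the property, split by whether one of $x,x'$ is an ancestor of the other or whether their lowest common ancestor sits strictly above both, has dominant singular type $(1-4z)^{-3/2}$ at $z=1/4$; its extracted leading $\Theta(k^2)$ term matches that of the square of the expected count, so $\mathbb{V}[\rho_k(\smallF)]\to 0$, and Proposition~\ref{propConcentration} then yields the stated concentration at $\mu_\rho(\smallF)$.

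\textbf{Main obstacle.} Unlike the BGFs in the two preceding propositions (where the marking variable enters as a simple polynomial), the natural BGF on plane trees marked by positions with red forest $\smallF$ satisfies a genuine functional equation in the marking variable. The main work therefore lies in the variance step: carrying out the doubly-pointed case analysis, and then verifying the cancellation of the leading $\Theta(k^2)$ contributions to $\mathbb{E}[\text{count}(\text{count}-1)]$ and $(\mathbb{E}[\text{count}])^2$ that leaves $\mathbb{V}[\rho_k(\smallF)]$ vanishing asymptotically.
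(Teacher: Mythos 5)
Your derivation of the mean is correct and matches the paper's: you point at the occurrence rather than mark it, which gives you $V_{\subF}(z)=R_w(z,1)$ where $R(z,w)$ is the paper's marking BGF, and your rational expression $z^{m+1}T/(1-2T)$ and the coefficient $\tfrac12\binom{2(k-m-1)}{k-m-1}=\binom{2k-2m-3}{k-m-1}$ agree with the paper. The structural characterisation of an occurrence (first $h$ sibling subtrees of a vertex $x$ forming $\subF$, with $x$ having at least one further child) is the same in both.

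The genuine gap is the variance step, which you acknowledge but do not carry out. Asserting that the doubly-pointed generating function has singular type $(1-4z)^{-3/2}$ and that the resulting $\Theta(k^2)$ term ``matches'' $(\mathbb{E}[\text{count}])^2$ is precisely the claim that needs to be proved: Proposition~\ref{propConcentration} requires $\mathbb{V}[\rho_k(\subF)]\to 0$, and this depends on an exact cancellation of leading-order terms, not merely on the two quantities having the same growth order. A case analysis over the ancestry relation of the two pointed vertices is more work than is needed, and you have not written it out. Moreover, your stated ``main obstacle'' somewhat overstates the difficulty: the functional equation $R=z\bigl(\seq{R}+(w-1)z^m\seqplus{R}\bigr)$ is quadratic in $R$ and has the explicit closed-form solution $R(z,w)=\tfrac12\bigl(1+(1-w)z^{m+1}-\sqrt{(1+(1-w)z^{m+1})^2-4z}\,\bigr)$, from which $R_{ww}(z,1)=2z^{2m+3}/(1-4z)^{3/2}$ follows by elementary differentiation, giving $[z^k]R_{ww}(z,1)=(k-2m-2)\binom{2k-4m-4}{k-2m-2}$ directly, with no case analysis at all. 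Plugging this, together with the first-moment coefficient, into Proposition~\ref{propMoments} shows $k\,\mathbb{V}[\rho_k(\subF)]$ converges, which is what the paper does and what your proof is missing.
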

\begin{proof}
If $\smallF$ has $h$ components, then an occurrence of $\smallF$ in a red tree
comprises the leftmost $h$ subtrees of some vertex $x$ that has at least one additional child vertex to the right.
See Figure~\ref{figRedFringes} for an illustration.
Hence, if $\RRR$ is the class of red trees augmented by marking occurrences of $\smallF$ with~$w$, then $\RRR$ satisfies the structural equation
$$
\RRR \;=\; z \left( \seq{\RRR} \:+\: (w-1) \+ z^m \+ \seqplus{\RRR} \right) .
$$
So the corresponding bivariate generating function, $R(z,w)$, satisfies the functional equation
$$
R(z,w) \;=\; \frac{z \left( 1 \:+\: (w-1) \+ z^m \+ R(z,w) \right)}{1-R(z,w)} ,
$$
and hence
$$
R(z,w) \;=\; \thalf \+ \Big( 1 + (1-w)\+z^{m+1} - \sqrt{\big( 1 + (1-w)\+z^{m+1} \big)^2 - 4\+z} \,\Big) .
$$
Extracting coefficients then yields
$$
\begin{array}{lcl}
[z^k]\+ R(z,1) & = & \frac{1}{k} \binom{2k-2}{k-1}, \\[6pt]
[z^k]\+ R_w(z,1)
& = & \binom{2k-2m-3}{k-m-1}, \\[6pt]
[z^k]\+ R_{ww}(z,1)
& = & (k-2m-2)\+ \binom{2k-4m-4}{k-2m-2}.
\end{array}
$$
Hence, applying Proposition~\ref{propMoments}
and taking limits gives
$$\liminfty[k]\mathbb{E}[\rho_k(\smallF)] \;=\; \frac{1}{2^{2\+m+1}}
\qquad\text{and}\qquad
\liminfty[k] k \+ \mathbb{V}[\rho_k(\smallF)] \;=\; \upsilon_\rho(\smallF),$$
where $\upsilon_\rho(\smallF)$ depends only on $|\smallF|$.
So,
$\rho_k(\smallF)$ satisfies the conditions for Proposition~\ref{propConcentration} and is thus concentrated at $\mu_\rho(\smallF)$ as required.
\end{proof}

\begin{figure}[ht]
  $$
  \begin{tikzpicture}[scale=0.24,line join=round]
    \draw [thin,fill=gray!30!white] (9.6,12.4) to[out=45,in=166](10,12.5657) to[out=346,in=121](12.4,10.4) to[out=301,in=45](12.4,9.6)--(11.4,8.6) to[out=225,in=301](10.6,8.6) to[out=121,in=270](9.4343,12) to[out=90,in=225](9.6,12.4);
    \draw [thin,fill=gray!30!white] (7.6,13.4) to[out=45,in=180](8,13.5657) to[out=0,in=135](10.4,12.4) to[out=315,in=45](10.4,11.6)--(9.4,10.6) to[out=225,in=315](8.6,10.6) to[out=135,in=270](7.4343,13) to[out=90,in=225](7.6,13.4);
    \draw [thin] (9.6,12.4) to[out=45,in=166](10,12.5657) to[out=346,in=121](12.4,10.4) to[out=301,in=45](12.4,9.6)--(11.4,8.6) to[out=225,in=301](10.6,8.6) to[out=121,in=270](9.4343,12) to[out=90,in=225](9.6,12.4);
    \draw [thin,fill=gray!30!white] (1.6,18.4) to[out=45,in=180](2,18.5657) to[out=0,in=135](4.4,17.4) to[out=315,in=45](4.4,16.6)--(3.4,15.6) to[out=225,in=315](2.6,15.6) to[out=135,in=270](1.4343,18) to[out=90,in=225](1.6,18.4);
    \draw [ultra thick] (3,16)--(4,17);
    \draw [ultra thick] (9,11)--(10,12);
    \draw [ultra thick] (11,9)--(12,10);
    \draw [red,very thick] (2,18)--(1,15)--(3,16);
    \draw [red,very thick] (7,14)--(6,8)--(8,13);
    \draw [red,very thick] (9,11)--(6,8)--(5,1)--(11,9);
    \draw [red,very thick] (15,7)--(14,6)--(13,3)--(16,5);
    \draw [red,very thick] (17,4)--(13,3)--(5,1)--(18,2);
    \plotpermnobox[red!30!black]{18}{15,0,0,0,1,8,14,0,0,0,0,0,3,6,7,5,4,2}
    \plotpermnobox{18}{0,18,16,17,0,0,0,13,11,12,9,10}
  \end{tikzpicture}
  \qquad\qquad
  \raisebox{36pt}{
  \begin{tikzpicture}[scale=0.33,line join=round]
    \draw[help lines] (0,0) grid (18,6);
    \draw[red,thick] (0,0)--(4,4)--(5,4)--(6,2);
    \draw[red,thick] (11,5)--(12,6)--(13,3)--(14,1);
    \draw[red,thick] (17,3)--(18,2);
    \draw[ultra thick] (6,2)--(7,3)--(8,3)--(9,4)--(10,4)--(11,5);
    \draw[ultra thick] (14,1)--(15,2)--(16,2)--(17,3);
  \end{tikzpicture}
  }
  $$
  \caption{A partial red tree and the corresponding {\L}uka\-sie\-wicz path; 
  the three marked red fringes correspond to the occurrences of the pattern $\mathbf{1,0,1}$}
  \label{figRedTreePatterns}
\end{figure}
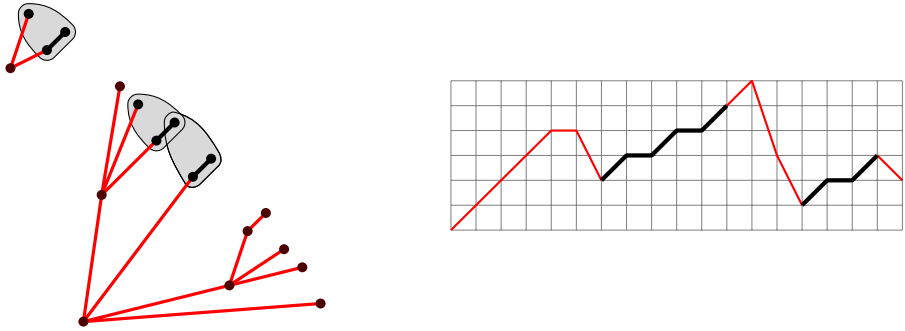
Our fourth and final concentration result concerns red fringes.
Recall that the random variable
$\rho_k(\smallF^+)$ records the
proportion of positions in a red tree whose red forest has at least $|\smallF|$ vertices,
and for which the graph induced by the rightmost $|\smallF|$ vertices of the forest is isomorphic to $\smallF$.

We would like to determine the bivariate generating function for red trees in which occurrences of the red fringe $\smallF$ are marked.
This is considerably less straightforward than was the case for the other parameters.
Primarily, this is because distinct occurrences of $\smallF$ may overlap. See the left of Figure~\ref{figRedTreePatterns} for an illustration.
To achieve our goal, it is convenient to rephrase our problem in terms of {\L}uka\-sie\-wicz paths.

Recall from Section~\ref{sectIntro} that a {\L}uka\-sie\-wicz path of length $n$ is a
sequence of integers $y_0,\ldots,y_n$ such that $y_0=0$,
$y_i\geqslant1$ for $i\geqslant1$,
and each step $s_i=y_i-y_{i-1}\leqslant1$.
It is easy to see that {\L}uka\-sie\-wicz paths
are in bijection with
red trees: visit the vertices of the tree from \emph{right to left} and let the height of the path be equal to the number of components in the forest induced by the vertices visited so far.
Thus, for each leaf vertex, the path contains an up-step, and for each internal vertex with $r$ children, the path contains a $(1\!-\!r)$-step.
See Figure~\ref{figRedTreePatterns} for an illustration.

Recall also that a pattern $\omega$ of length $m$ in a {\L}uka\-sie\-wicz path is a sequence of contiguous steps $\omega_1,\ldots,\omega_m$ in the path such that $\sum_{j=1}^i \omega_j > 0$
for $1\leqslant i\leqslant m$.
We do not consider sequences of steps for which the height drops to zero or below.
Thus, a pattern in a {\L}uka\-sie\-wicz path corresponds to an occurrence of a red fringe in a red tree.
Again, see Figure~\ref{figRedTreePatterns}, where this is illustrated.

\section{Patterns in {\L}uka\-sie\-wicz paths}\label{sectLukasPatterns}

The asymptotic distribution of patterns in \emph{words} has been investigated before.
For an exposition, see~\cite{FS2009} Examples~I.12, III.26 and IX.13.
The approach taken there makes use of the \emph{correlation polynomial} of a pattern, introduced by
Guibas \& Odlyzko in~\cite{GO1981} to analyse pattern-matching in strings, and also employs the
\emph{cluster method} of Goulden \& Jackson~\cite{GJ2004}.
We refine this approach for use with patterns in
{\L}uka\-sie\-wicz paths by utilising a generalisation of the correlation polynomial
and combining it with an application of the kernel method.

It is readily seen that the bivariate generating function, $L(z,y)$, for
{\L}uka\-sie\-wicz paths, in which $z$ marks length and $y$ marks height, satisfies the functional equation
\begin{equation}\label{eqLukaBGFHeightFuncEq}
  L(z,y) \;=\; z\+y \:+\: \frac{z\+y}{1-y}\big(L(z,1)-y\+L(z,y)\big) .
\end{equation}

Given a pattern $\omega=\omega_1,\ldots,\omega_m$,
let us use $h_i(\omega)=\sum_{j=1}^i \omega_j$ to denote the height after the $i$th step of~$\omega$, and let us call $h_m(\omega)$
the \emph{final height} of $\omega$.

The correlation polynomial of Guibas \& Odlyzko is univariate.
For our purposes, we define the \emph{bivariate} \emph{autocorrelation polynomial}, $\widehat{a}_\omega(z,y)$, for a pattern $\omega=\omega_1,\ldots,\omega_m$ in a {\L}uka\-sie\-wicz path as follows:
$$
\widehat{a}_\omega(z,y) \;=\; \sum_{i=1}^{m-1} c_i\+z^i\+y^{h_i(\omega)} ,
$$
where
$$
c_i  \;=\;
\begin{cases}
1, &\text{if $\omega_{i+1},\ldots,\omega_m = \omega_1,\ldots,\omega_{m-i}$;}\\
0, &\text{otherwise.}
\end{cases}
$$
Thus, $c_i$ records whether $\omega$ matches itself when shifted (left or right) by $i$, the variable $z$ marks the shift, and $y$ marks the height.
For example,
$\widehat{a}_\mathbf{1,1,0,1,1}(z,y) = z^3\+y^2+z^4\+y^3$.

Given a fixed pattern $\omega$ of length $m$ and final height $h$, we want to determine the trivariate generating function, $L_\omega(z,y,u)$, for {\L}uka\-sie\-wicz paths, where $u$ marks the number of occurrences of the pattern~$\omega$ in a path.
In order to achieve this, we first consider the class of {\L}uka\-sie\-wicz paths augmented by distinguishing an arbitrary selection of occurrences of $\omega$. Let $M_\omega(z,y,v)$ be the corresponding generating function, in which $v$ marks distinguished occurrences of the pattern in a path. By the standard inclusion-exclusion principle (see~\cite{FS2009} p.208), we know that
\begin{equation}\label{eqLMInclExcl}
  L_\omega(z,y,u) \;=\; M_\omega(z,y,u-1).
\end{equation}
In order to construct a functional equation for $M_\omega$, we consider subpaths each consisting of a maximal collection of overlapping distinguished occurrences of $\omega$. These collections are called \emph{clusters}.
It is readily seen that the generating function for clusters is
\begin{equation}\label{eqClusterGF}
  C_\omega(z,y,v) \;=\; \frac{z^m\+y^h\+v}{1-v\+\widehat{a}_\omega(z,y)},
\end{equation}
where $v$ is used to mark distinguished occurrences of $\omega$ in a cluster.

Furthermore, we have
\begin{equation}\label{eqMFuncEq}
  M_\omega(z,y,v) \;=\;
  z\+y
  \:+\: \frac{z\+y}{1-y}\big(M_\omega(z,1,v)-y\+M_\omega(z,y,v)\big)
  \:+\: M_\omega(z,y,v) \+ C_\omega(z,y,v),
\end{equation}
since a path grows either by adding an arbitrary step, as in~\eqref{eqLukaBGFHeightFuncEq}, or else by adding a
cluster.\footnote{This equation excludes distinguished occurrences of $\omega$ that begin with the first step of the path; this simplifies the algebra somewhat while having no effect on the asymptotics.}

Combining equations~\eqref{eqLMInclExcl}, \eqref{eqClusterGF} and \eqref{eqMFuncEq} and rearranging
gives us the following functional equation for $L_\omega(z,y,u)$:
$$
L_\omega(z,y,u) \;=\;
\frac
{z\+y\+\big(1 + (1-u)\+\widehat{a}_\omega(z,y)\big)\+\big(1 - y + L_\omega(z,1,u)\big)}
{z^m\+y^h\+(1-y)\+(1-u)+(1-y+z\+y^2)\+\big(1 + (1-u)\+\widehat{a}_\omega(z,y)\big)} .
$$
This equation is susceptible to the kernel method, so $L_\omega(z,1,u)=y_0(z,u)-1$, where $y_0$ is the appropriate root for $y$ of the denominator.
Rearranging, we obtain the following polynomial functional equation for $L=L(z,u)=L_\omega(z,1,u)$,
the bivariate generating function for {\L}uka\-sie\-wicz paths in which $u$ marks occurrences of $\omega$:
\begin{equation}\label{eqLukaBGFPatFuncEq}
  L \;=\; z\+(1+L)^2 \:-\: (1-u)\+\Big(z^m\+L\+(1+L)^h + \big(L - z\+(1+L)^2\big)\+\widehat{a}_\omega(z,1+L)\Big) .
\end{equation}
The fact that $L$ satisfies this equation enables us to demonstrate that patterns in {\L}uka\-sie\-wicz paths are concentrated, and moreover are
distributed normally in the limit.
The following proposition gives very general conditions for this to be the case for some parameter.
\begin{prop}[\cite{FS2009} Proposition~IX.17 with Theorem~IX.12; see also \cite{Drmota1997} Theorem~1]\label{propGaussian}
Let $F(z,u)$ be a bivariate function, analytic at $(0,0)$ and with non-negative Taylor coefficients, and
let $\xi_n$ be the sequence of random variables with probability generating functions
$$
\frac{[z^n]F(z,u)}{[z^n]F(z,1)} .
$$
Assume that $F(z,u)$ is a solution for $y$ of the equation
$$y\;=\;\Phi(z,u,y),$$
where $\Phi$ is a polynomial of degree at least two in $y$,
$\Phi(z,1,y)$ has non-negative Taylor coefficients and is analytic in some domain $|z|<R$ and $|y|<S$,
$\Phi(0,1,0)=0$,
$\Phi_y(0,1,0)\neq1$,
$\Phi_{yy}(z,1,y)\nequiv0$,
and there exist positive $z_0<R$ and $y_0<S$ satisfying the pair of equations
$$
\Phi(z_0,1,y_0)\;=\;y_0, \qquad\quad \Phi_y(z_0,1,y_0)\;=\;0.
$$
Then, as long as its asymptotic variance is non-zero, $\xi_n$
converges in law to a Gaussian distribution with mean and standard deviation asymptotically linear in~$n$.
\end{prop}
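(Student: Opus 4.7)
The plan is to establish a square-root singular expansion for $F(z,u)$ valid uniformly as $u$ ranges over a small complex neighbourhood of $1$, and then invoke Hwang's Quasi-Powers Theorem to extract the Gaussian limit law. The machinery is the analytic implicit function theorem together with the Weierstrass preparation theorem for Step~1--2, followed by singularity analysis in the sense of Flajolet and Odlyzko for Step~3.

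\textbf{Step 1: locate the dominant singularity at $u=1$.} The hypotheses $\Phi(0,1,0)=0$ and $\Phi_y(0,1,0)\neq 1$ together with analyticity yield, by the implicit function theorem, a unique analytic branch $F(z,1)$ of $y=\Phi(z,1,y)$ in a neighbourhood of $z=0$ with $F(0,1)=0$. Continuing this branch along the positive real axis, Pringsheim's theorem (using the non-negativity of Taylor coefficients of $\Phi(z,1,y)$) forces its dominant singularity to lie on the positive reals. The pair of equations $\Phi(z_0,1,y_0)=y_0$ and $\Phi_y(z_0,1,y_0)=0$ identify $(z_0,y_0)$ as the characteristic point; the condition $\Phi_{yy}(z,1,y)\nequiv 0$ together with the polynomial degree $\geqslant 2$ in $y$ then forces $\Phi_{yy}(z_0,1,y_0)>0$, so that the branch meets the critical manifold transversally.

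\textbf{Step 2: uniform square-root expansion.} Apply Weierstrass preparation to $\Psi(z,u,y) := y - \Phi(z,u,y)$ at the point $(z_0,1,y_0)$. Since $\Psi(z_0,1,\cdot)$ has a zero of exact order two at $y_0$, one obtains a factorisation $\Psi = K \cdot P$ on a polydisc around $(z_0,1,y_0)$, where $K$ is analytic and non-vanishing and $P(z,u,y) = (y-y_0)^2 + p_1(z,u)(y-y_0) + p_0(z,u)$ is a monic quadratic in $y$ with $p_0,p_1$ analytic. Solving $P=0$ for the branch continuing $F(z,1)$ yields
$$F(z,u) \;=\; g(z,u) \:-\: h(z,u)\sqrt{1 - z/\rho(u)},$$
where $g,h,\rho$ are analytic near $(z_0,1)$, $\rho(1)=z_0$ and $h(z_0,1)\neq 0$. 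This expansion is uniform in $u$ on a complex polydisc around $u=1$.

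\textbf{Step 3: transfer and quasi-powers.} The transfer theorems of Flajolet and Odlyzko, applied uniformly in $u$ over a common Delta-domain anchored at $\rho(u)$, give
$$[z^n] F(z,u) \;=\; \frac{h(\rho(u),u)}{2\sqrt{\pi}}\, \rho(u)^{-n}\, n^{-3/2}\,\bigl(1 + O(n^{-1})\bigr),$$
uniformly for $u$ close to $1$. Taking the ratio produces the probability generating function in quasi-powers form
$$p_n(u) \;=\; \frac{[z^n]F(z,u)}{[z^n]F(z,1)} \;=\; A(u)\,B(u)^n\,\bigl(1 + O(n^{-1})\bigr), \qquad B(u) \:=\: \frac{\rho(1)}{\rho(u)},$$
with $A$ and $B$ analytic at $u=1$ and $B(1)=1$. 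Hwang's Quasi-Powers Theorem now delivers convergence in law of $\xi_n$ to a Gaussian distribution, with mean asymptotically $n\,B'(1)/B(1)$ and variance asymptotically $n\bigl(B''(1)/B(1) + B'(1)/B(1) - (B'(1)/B(1))^2\bigr)$; the non-vanishing of this variance constant is precisely the asymptotic variance hypothesis in the statement.

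\textbf{Main obstacle.} The technical heart is securing the square-root expansion \emph{uniformly} in $u$ over a complex neighbourhood of $1$. One must show that, as $u$ varies, the branch of $y=\Phi(z,u,y)$ that analytically continues the $u=1$ branch remains the unique dominant singularity, with no other root of $\Psi(z,u,\cdot)$ crossing inside the disc of radius $|\rho(u)|$; this requires a compactness argument on the characteristic curve and non-vanishing of the Weierstrass discriminant throughout a polydisc. Once that is in hand, the existence of a common Delta-domain in which the transfer theorems apply uniformly is a standard consequence, and the rest is algebra.
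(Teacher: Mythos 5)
The paper gives no proof of this proposition at all: it is imported verbatim from Flajolet--Sedgewick (Proposition~IX.17 combined with the Quasi-Powers Theorem~IX.12) and Drmota, and your sketch --- locating the characteristic point, obtaining a square-root singular expansion uniform in $u$ via the implicit function theorem and Weierstrass preparation, transferring uniformly over a common $\Delta$-domain, and concluding with Hwang's Quasi-Powers Theorem --- is precisely the standard proof of that cited result, so your route coincides with the source the paper relies on. The uniformity and unique-dominant-singularity issue you single out as the main obstacle is indeed the only delicate point (it is where the cited references invest their effort), and your stated variance constant being nonzero correctly matches the proposition's hypothesis; note only that the asymptotic standard deviation is of order $\sqrt{n}$ (variance linear in $n$), the phrasing in the statement notwithstanding.
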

All that remains is to check that $L$ satisfies the relevant requirements.
\begin{repthm}{thmLukaPatternsGaussian}
The number of occurrences of a fixed pattern
in a {\L}uka\-sie\-wicz path of length $n$
exhibits
a Gaussian limit distribution
with mean
and standard deviation
asymptotically linear in $n$.
\end{repthm}
\begin{proof}
From~\eqref{eqLukaBGFPatFuncEq}, it can easily be seen that
$L(z,u)$ satisfies the conditions of Proposition~\ref{propGaussian}, with $\Phi(z,1,y)=z\+(1+y)^2$, $z_0=\frac{1}{4}$ and $y_0=1$.
\end{proof}

\section{Summing up}\label{sectLowerBound}

Since patterns in {\L}uka\-sie\-wicz paths are in bijection with
red fringes in red trees,
$L(z,u)$ is also the bivariate generating function
for red trees in which $u$ marks occurrences of the red fringe $\smallF$ corresponding to the pattern $\omega$, with
$m=|\smallF|$ and $h$ the number of components of $\smallF$.
Thus, we know that $\rho_k(\smallF^+)$ is concentrated.
It remains for us to determine the limiting mean.
\begin{prop}\label{propOmegaPlus}
Let $m=|\smallF|$ and $h$ be the number of components of
$\smallF$.
$\rho_k(\smallF^+)$ is concentrated at
$$
\mu_\rho(\smallF^+) \;=\; \frac{1}{2^{2m-h}}.
$$
\end{prop}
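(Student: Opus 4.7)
The plan is to differentiate the functional equation~\eqref{eqLukaBGFPatFuncEq} to extract $L_u(z,1)$, apply singularity analysis for the asymptotic mean, and then invoke Theorem~\ref{thmLukaPatternsGaussian} for concentration.

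Differentiating~\eqref{eqLukaBGFPatFuncEq} with respect to $u$ and setting $u=1$: each term still carrying an outer factor of $(1-u)$ vanishes, while the only surviving contribution from the bracket comes from differentiating $(1-u)$ itself. Crucially, at $u=1$ the relation $L=z(1+L)^2$ makes the factor $(L-z(1+L)^2)\+\widehat{a}_\omega$ disappear, so the autocorrelation polynomial drops out entirely. What remains is the linear equation
$$
L_u(z,1) \;=\; 2z(1+L)\+L_u(z,1) \:+\: z^m\+L\+(1+L)^h,
$$
with $L=L(z,1)$. Dividing $L=z(1+L)^2$ by $(1+L)$ gives the identity $1-2z(1+L)=(1-L)/(1+L)$, and solving produces the closed form
$$
L_u(z,1) \;=\; \frac{z^m\+L\+(1+L)^{h+1}}{1-L}.
$$

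Now singularity analysis near the dominant singularity $z_0=1/4$ of $L(z,1)$, where $L\to1$ and $1-L\sim 2\sqrt{1-4z}$ (coming from the usual Catalan square-root singularity), yields
$$
L_u(z,1) \;\sim\; \frac{(1/4)^m\cdot 2^{h+1}}{2\sqrt{1-4z}} \;=\; \frac{2^{h-2m}}{\sqrt{1-4z}}.
$$
The standard transfer theorem then gives $[z^k]\+L_u(z,1)\sim 2^{h-2m}\+\binom{2k}{k}$, while $[z^k]\+L(z,1)=C_k$, so Proposition~\ref{propMoments} produces an expected pattern count asymptotic to $2^{h-2m}\+k$ in a red tree of size $k$. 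Since the number of positions grows linearly in $k$, the expected proportion $\mathbb{E}[\rho_k(\smallF^+)]$ converges to $2^{h-2m}=1/2^{2m-h}$.

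For concentration, I appeal to Theorem~\ref{thmLukaPatternsGaussian}: the number of occurrences of $\omega$ in a {\L}uka\-sie\-wicz path has variance asymptotically linear in the path length, so the variance of the corresponding proportion is $O(1/k)\to 0$. Proposition~\ref{propConcentration} then delivers the desired concentration at $\mu_\rho(\smallF^+)=1/2^{2m-h}$. The main subtle point is recognising the happy cancellation by which the autocorrelation polynomial $\widehat{a}_\omega$ drops out of the computation of $L_u(z,1)$; this is what yields a limiting mean depending only on $|\smallF|$ and the number of components of $\smallF$, rather than on the detailed self-overlap structure of $\omega$.
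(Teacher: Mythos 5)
Your proposal is correct and follows essentially the same route as the paper: differentiate~\eqref{eqLukaBGFPatFuncEq} with respect to $u$, exploit the relation $L=z(1+L)^2$ at $u=1$ to kill the autocorrelation term, extract the asymptotic mean via Proposition~\ref{propMoments}, and invoke Theorem~\ref{thmLukaPatternsGaussian} for concentration. The only difference is cosmetic: where the paper writes $L_u(z,1)$ explicitly in terms of $F(z)$ and reads off exact coefficients $\binom{2k-2m+h}{k-m-1}$ before applying Stirling, you leave $L_u$ implicit in $L$ and perform singularity analysis at $z_0=1/4$; both land on the same asymptotic ratio $2^{h-2m}$ (and indeed your expression $\frac{z^m L(1+L)^{h+1}}{1-L}$ is algebraically identical to the paper's $\frac{z^m F^h(1-(1-2z)F)}{1-4z}$ under $F=1+L$). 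Your explicit remark that $\widehat{a}_\omega$ cancels at $u=1$, so the limiting mean depends only on $|\smallF|$ and its number of components, is a worthwhile observation that the paper leaves tacit.
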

\begin{proof}
Let $F(z)=\tfrac{1}{2z} \+ \big(1-\sqrt{1-4\+z}\big)$ be the generating function for plane forests.

Solving~\eqref{eqLukaBGFPatFuncEq} with $u=1$ gives $L(z,1)=F(z)-1$ (as expected).

Similarly, differentiating~\eqref{eqLukaBGFPatFuncEq} with respect to $u$, setting $u=1$, and solving the resulting equation gives
$$
L_u(z,1) \;=\; \frac{z^m\+F(z)^h\+\big(1-(1-2\+z)\+F(z)\big)}{1-4\+z}.
$$
Then, extracting coefficients yields
$$
\begin{array}{lcl}
[z^k]\+ L(z,1) & = & \frac{1}{k+1} \binom{2k}{k}, \\[6pt]
[z^k]\+ L_u(z,1) & = & \binom{2k-2m+h}{k-m-1}.
\end{array}
$$
Hence, applying Proposition~\ref{propMoments}
and taking limits,
$$
\liminfty[k]\mathbb{E}[\rho_k(\smallF^+)] \;=\; \frac{1}{2^{2m-h}}
.
$$
Concentration follows from Theorem~\ref{thmLukaPatternsGaussian}.
\end{proof}

We are finally in a position to compute a lower bound for the growth rate of the class of permutations avoiding $\pdiamond$, proving our main theorem.
\begin{repthm}{thm1324LowerBound}
$\gr(\av(\pdiamond)) > 9.81$.
\end{repthm}
\begin{proof}
We calculate the contribution to the growth rate from pairs consisting of a tree and a forest of bounded size.
From Proposition~\ref{propLB}, we know that, for each $N>0$, the growth rate is at least
$$
g_N(\lambda,\delta)
\;=\;
E(\lambda,\delta)^{\nfrac{1}{(1+\lambda)}}
\:\times\:
\prod\limits_{|\subT|+|\subF|\leqslant N}
\!\!
\,
Q(\smallT,\smallF)^{2\+\delta\+\lambda\+\mu(\subT,\subF)/(1+\lambda)},
$$
where
$$
\mu(\smallT,\smallF) \;=\;
\mu_\beta(\smallT)\+
\big(
\mu_\gamma(|\smallF|)\+\mu_\rho(\smallF^+)  \:+\:
\mu_\gamma(>\!|\smallF|)\+\mu_\rho(\smallF)
\big) ,
$$
as follows from~\eqref{eqChi} and~\eqref{eqZeta} and Propositions~\ref{propPsi}, \ref{propPhi}, \ref{propOmega} and~\ref{propOmegaPlus}.

Using Mathematica~\cite{Mathematica} to evaluate $Q(\smallT,\smallF)$ and $\mu(\smallT,\smallF)$ and then to apply numerical maximisation over values of $\lambda$ and $\delta$ yields
$$
g_{14}(\lambda,\delta)
\;>\;
9.81056
$$
with $\lambda\approx0.69706$ and $\delta\approx0.75887$.
\end{proof}
The determination of this value requires the processing of more than 1.6 million pairs consisting of a tree and a forest.
Larger values of $N$ would require more sophisticated programming techniques.
However, increasing $N$ is unlikely to lead to a significantly improved lower bound;
although the rate of convergence at $N\!=\!14$ is still quite slow, numerical analysis of the computational data suggests that $\liminfty[N]\max\limits_{\lambda,\delta}g_N(\lambda,\delta)$ is probably not far from 9.82.

We conclude with the observation that in the construction that gives our bound, the mean number of vertices in a blue subtree, $1/\delta$, is less than 1.32.
We noted earlier that
the cigar-shaped boundary regions
of
a typical $\pdiamond$-avoider
contain numerous small subtrees (although it is not immediately obvious how one should identify such a boundary tree).
Is it the case that
the mean size of these
subtrees is asymptotically bounded?
Perhaps, on the contrary, their average size grows unboundedly (but very slowly), and understanding how (and the rate at which) this occurs would lead to an improved lower bound.
In the meantime, the following question might be somewhat easier to answer:
\begin{question}
Asymptotically, what proportion of the points in a typical $\pdiamond$-avoider are left-to-right minima or right-to-left maxima?
\end{question}

\subsubsection*{Acknowledgements}

The author would like to thank Mireille Bousquet-M\'elou for valuable discussions relating to this work during the Cardiff Workshop on Combinatorial Physics in December 2013.
He is also grateful to Robert Brignall who provided useful feedback which helped to improve the presentation.

\emph{S.D.G.}

\bibliographystyle{plain}
{\footnotesize\bibliography{mybib}}

\end{document}